\newtheorem{theorem}{Theorem}[section]
\newtheorem{lemma}[theorem]{Lemma}
\newtheorem{definition}[theorem]{Definition}
\newtheorem{remark}[theorem]{Remark}
\newtheorem{proposition}[theorem]{Proposition}
\newtheorem{corollary}[theorem]{Corollary}
\newcommand{\ma}{\color{magenta}}
\definecolor{mgre}{cmyk}{0.92,0.00,0.59,0.25}
\definecolor{PineGreen}{cmyk}{0.92,0.00,0.59,0.25}
\definecolor{ForestGreen}{cmyk}{0.91,0.00,0.88,0.12}
\definecolor{RawSienna}{cmyk}{0.00,0.72,1.00,0.45}
\definecolor{Mulbery}{cmyk}{0.34,0.90,0.00,0.02}
\definecolor{Sepia}{cmyk}{0.00,0.83,1.00,0.70}
\definecolor{Mahogany}{cmyk}{0.00,0.85,0.87,0.35}
\def\bE{{\mathbb{E}}}
\def\bM{{\mathbb{M}}}
\def\bN{{\mathbb{N}}}
\def\bP{{\mathbb{P}}}
\def\bR{{\mathbb{R}}}
\def\bfA{{\mathbf{A}}}
\def\bfJ{{\mathbf{J}}}
\def\bfK{{\mathbf{K}}}
\def\bfM{{\mathbf{M}}}
\def\bfY{{\mathbf{Y}}}
\def\cB{{\mathcal{B}}}
\def\cD{{\mathcal{D}}}
\def\cE{{\mathcal{E}}}
\def\cF{{\mathcal{F}}}
\def\cH{{\mathcal{H}}}
\def\cO{{\mathcal{O}}}
\def\cP{{\mathcal{P}}}
\def\1{{\mathbf{1}}}
\newcommand{\lon}{\Longleftrightarrow}
\newtheorem{thm}{Theorem}[section]
\newtheorem{prop}[thm]{Proposition}
\newtheorem{lem}[thm]{Lemma}
\newtheorem{cor}[thm]{Corollary}
\theoremstyle{definition}
\newtheorem{defi}[thm]{Definition}
\newtheorem{prob}{}[section]
\newtheorem{rem}[thm]{Remark}
\newtheorem{ex}[thm]{Example}
\newtheorem{pro}{Problem}[section]
\newtheorem{ass}[thm]{Assumption}
\newcommand{\capa}{\mathrm{Cap}}
\newcommand{\ov}{\overline}
\newcommand{\bd}{\begin{defi}}
\newcommand{\ed}{\end{defi}}
\newcommand{\bpro}{\begin{pro}}
\newcommand{\epro}{\end{pro}}
\newcommand{\bec}{\begin{cases}}
\newcommand{\eec}{\end{cases}}
\newcommand{\bpr}{\begin{prob}}
\newcommand{\epr}{\end{prob}}
\newcommand{\bt}{\begin{thm}}
\newcommand{\et}{\end{thm}}
\newcommand{\ba}{\begin{ass}}
\newcommand{\ea}{\end{ass}}
\newcommand{\br}{\begin{rem}}
\newcommand{\er}{\end{rem}}
\newcommand{\bpm}{\begin{pmatrix}}
\newcommand{\epm}{\end{pmatrix}}
\newcommand{\be}{\begin{ex}}
\newcommand{\ee}{\end{ex}}
\newcommand{\bp}{\begin{prop}}
\newcommand{\ep}{\end{prop}}
\newcommand{\bl}{\begin{lem}}
\newcommand{\el}{\end{lem}}
\newcommand{\bc}{\begin{cor}}
\newcommand{\ec}{\end{cor}}
\newcommand{\bq}{\begin{que}}
\newcommand{\eq}{\end{que}}
\newcommand{\beqn}{\begin{eqnarray*}}
\newcommand{\eeqn}{\end{eqnarray*}}
\newcommand{\beqnn}{\begin{eqnarray}}
\newcommand{\eeqnn}{\end{eqnarray}}
\newcommand{\bequ}{\begin{equation}}
\newcommand{\eequ}{\end{equation}}
\newcommand{\benu}{\begin{enumerate}}
\newcommand{\eenu}{\end{enumerate}}
\newcommand{\barr}{\begin{array}{rcl}}
\newcommand{\ear}{\end{array}}
\newcommand{\la}{\label}
\newcommand{\ds}{\displaystyle}
\newcommand{\sm}{\setminus}
\newcommand{\ttr}{\textrm}
\title{Harmonic functions for recurrent
symmetric $\alpha$-stable processeses with non-local perturbations}
\author{
\textsc{Kaneharu Tsuchida} %names of authors
}
\date{\today}
\begin{document}

\maketitle

  \begin{abstract}
   Let $\bfM$ be the recurrent symmetric (relativistic) $\alpha$-stable
   process on $\bR^d$. Let $\cH^{\mu + F} (:= \cH + \mu + F)$ be
   a Schr\"odinger type operator with local and non-local perturbations
   $\mu$ and $F$. 
   If $\mu$ and $F$ satisfy suitable conditions
   associated with Kato class, 
   we prove the existence of ground state for a Schr\"odinger type
   operator relating to $\cH^{\mu,F}$. 
   Furthermore, we prove the ground state becomes a
   probabilistically harmonic function
   of the Schr\"odinger operator generated by $\cH^{\mu,F}$. 
  \end{abstract}
  \bigskip

  \noindent
  \textbf{Key words:}
  recurrent (relativistic) stable process,
  Dirichlet form, harmonic function. \medskip
  
  \noindent
  {\bf AMS 2020 \textit{Mathematics Subject Classification}}.
  Primary 60J45; Secondary 60J76, 31C25.

  \section{Introduction}
 Let $\bfM = (\bP_x,X_t)$ be the symmetric (relativistic)
 $\alpha$-stable process on $\bR^d$ and $\cH^{}$ its generator,
 that is, $\cH^{} = m - (-\Delta + m^{2/\alpha})^{\alpha/2}\ $ 
 ($m \ge 0,\ 0 < \alpha < 2$, and $m = 0$ in standard stable case).
 It is well known that $\bfM$ is recurrent if
 $d = 1,2$ and $m > 0$, or $1 = d \le \alpha$ and $m = 0$.
 Let $(\cE^{}, \cF^{})$ be the recurrent Dirichlet form
 associated with $\bfM$. 
 Let $\mu$ be a signed smooth Green-tight Kato measure generated by $\bfM$. 
 In \cite{Ta}, Takeda showed the existence and
 continuity of the ground state associated
 with a Schr\"odinger type operator
 $\cH^{\mu} = -\cH^{} + \mu$ if $m = 0$. 
 In \cite{Tsu2}, we extend results in \cite{Ta} to the case $m > 0$.
 Since the potential term is a signed smooth measure,
 the Schr\"odinger operators in \cite{Ta, Tsu2} correspond to
 local Feynman-Kac perturbations.
 
 In this paper, we extend the arguments in \cite{Ta, Tsu2}
 to Schr\"odinger-type operators
 with non-local perturbations.
 Let $F(x,y)$ be a symmetric, bounded and
 measurable function on $\bR^d \times \bR^d$
 with $F(x,x) = 0$ for any $x \in \bR^d$.
 Then, the Schr\"odinger operator with
 non-local perturbation is defined by 
 \begin{align}
  \cH^{\mu,F}f(x) := \cH f(x) + \mu f(x) + \int_{\bR^d} \left(e^{F(x,y)} - 1\right)
  f(y) N(x,dy) \quad \text{for $f \in \cB_b(\bR^d)$},
  \la{Sch-01}
 \end{align}
 where $N(x,dy)$ is the jump kernel of L\'evy system
 associated with $\bfM$. 
 Let $(\cE^{\mu,F}, \cF)$ be the quadratic form associated with
 $\cH^{\mu,F}$ (see \eqref{p-form} below).
 Let $\{A_t\}$ be an additive functional defined by
 \begin{align}
  A_t := A_t^{\mu} + A_t^F = A_t^{\mu} + \sum_{s \le t}F(X_{s-}, X_s),
 \end{align}
 where $A_t^{\mu}$ is the continuous additive functional
 which corresponds to the local perturbation $\mu$. 
 Then it is known that the Feynman-Kac semigroup of $\cH^{\mu,F}$ is
 \begin{align}
  p_t^{\mu,F} f(x) = \bE_x\left[\exp\left(A_t\right) f(X_t)\right]
 \end{align}
 for $f$ is a nonnegative Borel function on $\bR^d$. 
 Noting the definition in \eqref{Sch-01},
 we would like to emphasis that 
 the decomposition $F^{+} - F^{-}$ of $F$
 does not imply the sign decomposition of the perturbation term
 in contrast to the local perturbations. 
 Due to this, considering non-local perturbations is more difficult than
 dealing with local ones. 
 A decomposition developed by Li \cite{Li}
 serves as a powerful tool in the analysis of
 non-local perturbations. 
 The decomposition is given as follows:
 \begin{align}
  G^+ = (e^{F^+} - 1) e^{-F^-}, \quad G^- = 1 - e^{-F^-}.
  \la{decomp-01}
 \end{align}
 Then it holds that
 \begin{align}
  G(x,y) := e^{F(x,y)} -1 = G^+(x,y) - G^-(x,y). 
 \end{align}
 While the negative part $G^-$ is fully determined by $F^-$,
 the positive part $G^+$ depends on both $F^{+}$ and $F^-$.
 Nevertheless, this decomposition is very convenient for
 analyzing non-local perturbations.
 
 Now, we briefly explain assumptions on perturbations.
 Suppose that all parts of perturbation belongs the Kato class.
 In this paper,  the Kato class is defined by means of additive functionals.
 Here we note the equivalence defined by the additive functional
 (that is, probabilistic definition) and
 (compensated) Green function (that is, classical or analytic definition).
 Especially, Kuwae and Takahashi \cite{KT} established
 this equivalence in low dimensional cases (recurrent cases). 
 Since $\bfM$ is recurrent, the normal Green function of $\bM$ does
 not exist. 
 Thus it is impossible to define the Green-tight Kato measure by
 using the Green function. In fact,
 for the transience case, the Green function is often used
 to define the Green-tightness. 
 Now we first kill $\bfM$ by $-\mu^- - F^-$.
 We denote by $\bfM^-$ the killed process.
 The process $\bfM^-$ is transient, so there exists the Green function.
 In this paper, we define the Green-tightness
 by using the Green function of $\bfM^-$.
 For a non-local perturbation $F$,
 we define (Green-tight) Kato classes as follows.
 Let $\xi_{F}$ be the Revuz measure corresponding to the dual
 predictable projection of purely discontinuous
 additive functional $A_t^{F} = \sum_{s \le t}F(X_{s-},X_s)$. 
 We define that $F$ belongs to a (Green-tight) Kato class
 by means of the measure $\xi_{F}$, similarly to the case of $\mu$. 

 Next we explain a ground state in this paper.
 Let $\xi_{G^+}$ be defined analogously to $\xi_F$,
 where $G^+$ is given in \eqref{decomp-01}.
 Then we put $\rho^+ = \mu^+ + \xi_{G^+}$, and
 a quadratic form $\cE^Y$ is defined by
 \begin{align}
  \cE^Y(u,u) = \cE^{\mu,F}(u,u) + \int_{\bR^d} u^2 d\rho^+.
 \end{align}
 In fact, we see that
 the domain of $\cE^Y$ equals the original domain $\cF$ in our setting.
 Moreover, we can prove that $(\cE^{Y}, \cF)$ 
 becomes a transient Dirichlet form.
 Now we consider the following infimum:
 \begin{align}
  \lambda := \lambda(\mu,F) = \inf\left\{\cE^{Y}(u,u)
  : u \in \cF_e,\ \int_{\bR^d}
  u^2 d\rho^+ = 1\right\},
  \la{eq-ground}
 \end{align}
 where $\cF_e$ is the extended Dirichlet space of $(\cE^Y,\cF)$. 
 Let $h$ be a function which attains the infimum in
 the right hand side of \eqref{eq-ground} (if exists).
 We call the function $h$ the ``{\it $\lambda$-ground state}''.
 Note that $h$ satisfies $\cE^{\mu,F}(h,h) = 0$ if $\lambda = 1$. 
 For the existence, we use a method introduced by Takeda (\cite{Ta2}).
 The condition in this method is called {\it ``class (T)''}.
 First we check that our setting satisfies the class (T).
 Using this, we then know that
 $(\cF_e, \cE^{Y})$ is compactly embedded in $L^2(\bR^d, \rho^+)$.
 This compact embedding implies the existence of ground state $h$.
 We find that the function $h$ is unique, strictly positive
 and bounded. 
 Moreover, by the resolvent strong Feller property
 (one of conditions in the class (T)), $h$ is continuous.
 
 Now we define the $\lambda$-harmonicity of functions.
 A function $h$ is said to be $\lambda$-harmonic if
 it satisfies that for any relatively compact open set $U \subset \bR^d$, 
 \begin{align}
  h(x) = \bE_x
  \left[\exp\left(A_{\tau_U} + (\lambda - 1)
  A_{\tau_U}^{\rho^+}\right) h(X_{\tau_U})\right], \quad
  \text{for any $x \in U$}.
 \end{align}
 Let $\lambda$ be the infimum corresponding with the ground state
 \eqref{eq-ground}. 
 After constructing the $\lambda$-ground state $h$, we show that
 the $\lambda$-ground state becomes $\lambda$-harmonic. 
 If $\lambda = 1$, the function $h$ is nothing but
 the $\cH^{\mu,F}$-harmonic function.
 To prove this, we can use the method in \cite{K}
 if $U$ is a very small domain.
 To extend $U$ to any relatively compact open set,
 we use a similar argument in \cite{BB}, but the argument depends on
 the quasi-left-continuity of original process and additive functionals.
 We now consider the non-local perturbation,
 so the associated additive functional
 is discontinuous. Hence we need to take care of this. 
 %For the function $F(x,y)$ as mentioned above,
 %the corresponding additive functional is
 %$A_t^F = \sum_{s \le t} F(X_{s-},X_s)$.

 The structure of this paper is the following:
 In section 2, we collect some fundamental tools for our arguments.
 In particular, the Dirichlet form generated by symmetric stable process,
 Kato class, adjusted decomposition of non-local perturbation and
 related properties. We prove the existence of
 $\lambda$-ground state in section 3.
 In section 4, we show that the $\lambda$-ground state
 is $\lambda$-harmonic function. 
 
 We use $C, C_1, C_2, \ldots$ as positive constants which may
 be different at different occurrences.
 We write $f \in \cB(\bR^d)$ (resp. $f \in \cB^+(\bR^d)$,
 $f \in \cB_b(\bR^d)$)
 if the function $f$ is (resp. non-negative, bounded) Borel measurable
 on $\bR^d$. The notations $dx,dy,\ldots$ mean Lebesgue measures
 on $\bR^d$. 
 
\section{Preliminaries}
Let $\bfM = (\Omega, \cF, \cF_t, X_t, \bP_x, x \in \bR^d)$
be the symmetric (relativistic) $\alpha$-stable process on $\bR^d$.
The generator $\cH$ of symmetric (relativistic) 
$\alpha$-stable process is
\begin{align}
 \cH = m - (-\Delta + m^{2/\alpha})^{\alpha/2}
\end{align}
where $\Delta$ is the Laplacian on $\bR^d$, 
$m \ge 0$ and $0 < \alpha < 2$. 
If $m = 0$ (resp. $m > 0$), then $\bfM$ is called the 
{\it standard symmetric $\alpha$-stable process} (resp.
{\it relativistic symmetric $\alpha$-stable process}).
$\bfM$ is recurrent if $\alpha \ge d = 1$ (resp. $d = 1,2$).
In this paper, we only consider recurrent $\alpha$-stable processes. 
It is known by \cite{BG} and \cite{CS} that their Dirichlet form
on $L^2(\bR^d)$ are 
\begin{eqnarray*}
 & &  \cE^{}(u,v) = C(d, - \alpha) \iint_{\bR^d \times \bR^d \setminus \mathsf{diag}}
 \frac{ (u(x) - u(y))(v(x) - v(y))}{|x-y|^{d + \alpha}}
 \psi(m^{1/\alpha} |x-y|) dx dy, \\
 & & \cF = \cD(\cE^{\alpha}) = \{u \in L^2(\bR^d) :
  \cE^{}(u,u) < \infty\},
\end{eqnarray*}
where
$C(d,-\alpha) = \frac{\alpha \Gamma(\frac{d + \alpha}{2})}{2^{1-\alpha} \pi^{d/2} \Gamma( 1 - \frac{\alpha}{2})}$ and 
\[
 \psi(r) = \frac{I(r)}{I(0)} \quad \textrm{with}\quad  I(r) = \int_{0}^{\infty}
 s^{\frac{d + \alpha}{2} - 1} e^{- \frac{s}{4} - \frac{r^2}{s}} ds.
 \]
 Let $p_t(x,y)\ (t > 0, x,y \in \bR^d)$ be the transition density function and
 $r_{\alpha}(x,y)\ (\alpha > 0)$ the resolvent kernel of $\bfM$, that is,
 \begin{align}
  r_{\alpha}(x,y) = \int_{0}^{\infty} e^{-\alpha t} p_t(x,y) dt.
 \end{align}
 Let $\{p_t, t \ge 0\}$ be the transition semigroup of $\bfM$.
 A positive Borel measure $\nu$ on $\bR^d$ is called {\it ``smooth''} if
 $\mu$ charges no exceptional set and there exists an increasing sequence
 $\{F_{\ell}\}_{\ell=1}^{\infty}$ of compact sets such that
 $\mu(F_{\ell}) < \infty$ for any $\ell$ and $\capa(K \setminus F_{\ell}) \to 0$
 as $\ell \to \infty$ for any compact set $K$, where $\capa$ means
 the $1$-capacity associated with $(\cE,\cF)$.
 We denote by $S$ the class of all smooth measures.
 There is one-to-one correspondence between a smooth measure $\mu$ and
 a positive continuous additive functional $A_t^{\nu}$, it is called
 {\it ``Revuz correspondence''}: for any $f \in \cB^+(\bR^d)$ and
 $\gamma$-excessive function $g$,
 \begin{equation}
  \int_{\bR^d} f(x) g(x) \nu(dx) = \lim_{t \to 0} \frac{1}{t}
   \int_{\bR^d} \bE_x \left[ \int_{0}^{t} f(X_s) dA_s^{\nu}\right] g(x) dx.
   \la{Rev-1}
 \end{equation}

 We define the function $G_K(x,y)$ by
 \begin{equation}
  R_K(x,y) =
  \begin{cases}
   |x-y|^{\alpha - d} & \mathrm{for}\ d > \alpha, \\
   \log(|x-y|^{-1}) & \mathrm{for}\ d = \alpha = 1, \\
   |x-y|^{\alpha - 1} & \mathrm{for}\ \alpha > d = 1.
  \end{cases}
 \la{eq:Kato}
 \end{equation}

 \begin{definition} \rm
 \la{def-Kato-Green}
 For a positive smooth measure $\nu$ on $\bR^d$, $\nu$ is said to be of
 {\it the Kato class} (in notation $\nu \in \bfK$) if
 \begin{equation}
  \lim_{r \to 0} \sup_{x \in \bR^d} \int_{|x-y| < r} |R_K(x,y)| \nu(dy)
   = 0. 
   \la{D:Kato}
 \end{equation}
 \end{definition}

 Next we introduce properties of Kato measure $\bfK$. 
 Put $R_{\alpha} \nu(x) = \int_{\bR^d} r_{\alpha}(x,y) \nu(dy)$
 for $\nu \in \bfK$.

 \begin{theorem}
 [{\rm \cite[Example 5.1]{KT}}] \la{thm-Kato-class}
  If $\mu$ is in $\bfK$, then $\mu$ is smooth.
  Moreover, 
  the following statements are equivalent each other:
  \begin{enumerate}[{\rm (1)}]
   \item $\nu \in \bfK$,
   \item $\ds \lim_{\alpha \to 0} \sup_{x \in \bR^d} R_{\alpha}\nu(x) = 0$, 
   \item $\ds \lim_{t \to 0} \sup_{x \in \bR^d} \bE_x [ A_t^{\nu}] = 0$, where
	 $A_t^{\nu}$ is the positive continuous additive functional
	 corresponding to $\nu$ in the sense of Revuz. 
  \end{enumerate}
 \end{theorem}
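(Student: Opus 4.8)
\noindent
The plan is to reconstruct the argument of \cite[Example 5.1]{KT}, which is powered by the sharp two‑sided heat kernel bounds available for the recurrent (relativistic) $\alpha$-stable process. I would first record that, for small $t>0$ --- the only regime relevant for Kato‑class questions, the large‑$t$ behaviour being strictly more favourable --- and all $x,y\in\bR^d$,
\begin{align}
 p_t(x,y)\asymp t^{-d/\alpha}\wedge\frac{t}{|x-y|^{d+\alpha}}.
\end{align}
Integrating this against $e^{-\alpha t}\,dt$ produces the companion estimate for the resolvent kernel: there is $r_0>0$ with $r_\alpha(x,y)$ comparable to $|R_K(x,y)|$ up to a bounded additive term for $|x-y|<r_0$, while off the diagonal $r_\alpha(x,y)$ decays polynomially (exponentially, in the relativistic case) in $|x-y|$, with a prefactor that vanishes as the resolvent parameter grows. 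I would also record the elementary fact that the purely local condition \eqref{D:Kato} already forces $\sup_x\nu(B(x,r))<\infty$ for all $r>0$ (pair the lower bound for $|R_K|$ on an annulus $\{r/2<|x-y|<r\}$ with \eqref{eq:Kato}, and cover balls by finitely many such annuli); in particular $\nu$ is locally finite. Smoothness of $\nu\in\bfK$ then follows in the usual way: for each compact $K$ the $\alpha$-potential $R_\alpha(\mathbf{1}_K\nu)$ is bounded (the near‑diagonal part by \eqref{D:Kato}, the rest by the off‑diagonal decay of $r_\alpha$ and $\nu(K)<\infty$), so $\mathbf{1}_K\nu$ is of finite energy integral, hence smooth, and $\nu$ is the increasing limit of such measures.

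For the equivalence $(2)\Leftrightarrow(3)$ I would use no kernel information, only the additive‑functional calculus and Khasminskii's summation. Since $1\le e^{\alpha(t-s)}$ for $0\le s\le t$, we have $\bE_x[A_t^\nu]\le e^{\alpha t}\bE_x[\int_0^\infty e^{-\alpha s}\,dA_s^\nu]=e^{\alpha t}R_\alpha\nu(x)$, so given $\eps>0$ one picks $\alpha$ with $\sup_x R_\alpha\nu(x)<\eps$ and restricts to $t\le 1/\alpha$ to obtain $(2)\Rightarrow(3)$. Conversely, choosing $\delta>0$ with $c(\delta):=\sup_x\bE_x[A_\delta^\nu]$ small, the Markov property and additivity give $\sup_x\bE_x[A_{(n+1)\delta}^\nu-A_{n\delta}^\nu]\le c(\delta)$ for all $n$, whence
\begin{align}
 R_\alpha\nu(x)=\sum_{n\ge 0}\bE_x\!\left[\int_{n\delta}^{(n+1)\delta}e^{-\alpha s}\,dA_s^\nu\right]\le c(\delta)\sum_{n\ge 0}e^{-\alpha n\delta}=\frac{c(\delta)}{1-e^{-\alpha\delta}}\le\frac{c(\delta)}{1-e^{-1}}
\end{align}
as soon as $\alpha\delta\ge 1$; letting $\delta\to0$ gives $(3)\Rightarrow(2)$.

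To connect these with the analytic condition $(1)$ I would split $R_\alpha\nu(x)=\int_{|x-y|<r_0}r_\alpha(x,y)\,\nu(dy)+\int_{|x-y|\ge r_0}r_\alpha(x,y)\,\nu(dy)$. By the two‑sided resolvent estimate the near‑diagonal term is comparable to $\int_{|x-y|<r_0}|R_K(x,y)|\,\nu(dy)$ (the bounded additive error contributes $\lesssim\nu(B(x,r_0))$, which by the explicit form \eqref{eq:Kato} of $R_K$ is in turn $\lesssim$ that same integral; the case $\alpha>d=1$, where $R_K$ is bounded, is even easier). The far term is, for fixed $\alpha$, finite and tends to $0$ as $r_0\to0$, and for fixed $r_0$ it tends to $0$ as the resolvent parameter $\to\infty$. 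Hence $(1)\Rightarrow(2)$ --- first choose $r_0$ to make the near term $<\eps/2$ uniformly in $x$, then $\alpha$ to make the far term $<\eps/2$ --- while $(2)\Rightarrow(1)$, since the lower bound $r_\alpha\gtrsim|R_K|$ near the diagonal gives $\int_{|x-y|<r_0}|R_K|\,\nu(dy)\lesssim R_\alpha\nu(x)$ uniformly in $x$, and the left side tends to $0$ as $r_0\to0$.

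The step I expect to be the genuine obstacle is the uniform‑in‑$x$ control of the \emph{far} contribution $\int_{|x-y|\ge r_0}r_\alpha(x,y)\,\nu(dy)$: because \eqref{D:Kato} is local, $\nu$ may have infinite total mass (Lebesgue measure itself lies in $\bfK$), so one cannot bound it crudely by $\|r_\alpha\|_{L^\infty(\{|x-y|\ge r_0\})}\,\nu(\bR^d)$. The remedy is to combine the genuine off‑diagonal decay of $r_\alpha(x,y)$ in $|x-y|$ with the a priori local boundedness $\sup_x\nu(B(x,r))<\infty$ extracted above: decomposing $\{|x-y|\ge r_0\}$ into dyadic annuli centred at $x$ and pairing the decaying kernel with the uniformly bounded annular masses of $\nu$ renders the far term uniformly small in $x$, and small as the resolvent parameter grows. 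Everything else is the routine splitting above together with the elementary summations.
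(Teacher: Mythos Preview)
The paper does not give its own proof of this theorem; it is stated as a direct citation of \cite[Example~5.1]{KT}, so there is no in-paper argument to compare against. Your proposal is a reasonable reconstruction of how the Kuwae--Takahashi argument runs, and the overall architecture (heat-kernel two-sided bounds, the Khasminskii summation for $(2)\Leftrightarrow(3)$, and the near/far splitting for $(1)\Leftrightarrow(2)$) is correct. You also correctly identify the only genuinely delicate point, the uniform-in-$x$ control of the far contribution when $\nu$ may have infinite mass, and your dyadic-annulus argument against the uniform local mass bound $\sup_x\nu(B(x,r))<\infty$ handles it.

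One step is underspecified: your $(2)\Rightarrow(1)$. You write that $r_\alpha\gtrsim|R_K|$ near the diagonal gives $\int_{|x-y|<r_0}|R_K|\,d\nu\lesssim R_\alpha\nu(x)$, ``and the left side tends to $0$ as $r_0\to0$''. But the right-hand side is independent of $r_0$, so as written this only yields boundedness. The issue is that the implicit constant in $r_\alpha\gtrsim|R_K|$ degenerates as the resolvent parameter grows (e.g.\ in the Cauchy case $d=\alpha=1$ one has $r_\alpha(x,y)\sim\pi^{-1}\log(1/(\alpha|x-y|))$, carrying a $-\pi^{-1}\log\alpha$ defect relative to $R_K$). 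The fix is either (i) to observe that the comparison $r_\alpha(x,y)\ge c\,|R_K(x,y)|$ holds with an \emph{absolute} $c$ provided $|x-y|<r_0(\alpha)$ with $r_0(\alpha)\to0$ as $\alpha\to\infty$ (e.g.\ $r_0(\alpha)\asymp\alpha^{-2}$ in the Cauchy case), and then pick $\alpha$ first and $r<r_0(\alpha)$ second; or (ii) to route through $(3)$: from $\bE_x[A_t^\nu]=\int_{\bR^d}\bigl(\int_0^t p_s(x,y)\,ds\bigr)\nu(dy)$ and the lower heat-kernel bound one gets, for instance when $d=\alpha=1$, $\bE_x[A_t^\nu]\ge c\int_{|x-y|<t^2}R_K(x,y)\,\nu(dy)$, which gives $(3)\Rightarrow(1)$ directly with no parameter-tracking.

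A minor remark: the paper's statement has ``$\alpha\to0$'' in item~(2), evidently a typo for $\alpha\to\infty$; you have treated it correctly.
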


 \begin{remark}
 \la{R:Kato}
 For $\alpha > d = 1$, it is known that
 \[
  \nu \in \bfK \quad \lon \quad \sup_{x \in \bR^1} \int_{|x-y|
 \le 1} \nu(dy) < \infty, 
 \]
 by using the same argument in the Remark below \cite[Proposition 3.1]{CZ}.
\end{remark}
 
 The next lemma is a Poincar\'e-type inequality by Stollmann-Voigt \cite{SV}.
 \begin{lemma}
  [\cite{SV}]\la{lem-Poincare-SV}
  Let $\nu \in \bfK$. Then, for $\alpha > 0$
  \begin{equation}
   \int_{\bR^d} u^2(x) \eta(dx) \le \| R_{\alpha} \eta\|_{\infty}
    \cdot \cE_{\alpha}(u,u), \quad u \in \cD(\cE), 
    \la{SV}
  \end{equation}
  where $\cE_{\alpha}(u,u) = \cE(u,u) + \alpha \int_{\bR^d} u^2 dx$. 
 \end{lemma}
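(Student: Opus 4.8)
The plan is to recognize \eqref{SV} as the Stollmann--Voigt form-boundedness inequality and to prove it in our symmetric Markovian setting in three steps: a reduction to convenient data, the variational identity for the $\alpha$-potential $R_\alpha\nu$, and the core Markovian estimate, which is the real content and which I would import from \cite{SV}.

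The first step is a double reduction. Since normal contractions operate on $(\cE,\cD(\cE))$, one has $|u|\in\cD(\cE)$ with $\cE(|u|,|u|)\le\cE(u,u)$, so it suffices to treat $u\ge0$; taking $u$ quasi-continuous and using $u\wedge n\uparrow u$ with $\cE(u\wedge n,u\wedge n)\le\cE(u,u)$ and monotone convergence (legitimate because $\nu$, being Kato, is smooth and hence charges no set of zero capacity, by Theorem \ref{thm-Kato-class}), one further reduces to $0\le u\in\cD(\cE)\cap L^\infty(\bR^d)$; then also $u^2\in\cD(\cE)\cap L^\infty(\bR^d)$. On the measure side, a Kato measure is Radon (from \eqref{D:Kato} and Remark \ref{R:Kato}), so $\nu_n:=\mathbf{1}_{B(0,n)}\nu$ has finite total mass; since $R_\alpha\nu_n\le R_\alpha\nu$, whence $\|R_\alpha\nu_n\|_\infty\le\|R_\alpha\nu\|_\infty$, and $\int u^2\,d\nu_n\uparrow\int u^2\,d\nu$ by monotone convergence, it suffices to prove \eqref{SV} for a finite-mass $\nu\in\bfK$.

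For such data put $w:=R_\alpha\nu$. Then $w$ is nonnegative, bounded by $\|R_\alpha\nu\|_\infty$, and $\alpha$-excessive; it has finite $\alpha$-energy, because $\cE_\alpha(w,w)=\int w\,d\nu\le\|w\|_\infty\,\nu(\bR^d)<\infty$, so $w\in\cD(\cE)$, and it obeys the variational identity $\cE_\alpha(w,v)=\int\tilde v\,d\nu$ for all $v\in\cD(\cE)\cap L^\infty(\bR^d)$, with $\tilde v$ a quasi-continuous version. With $v=u^2$ this gives the exact representation
\[
\int_{\bR^d}u^2\,d\nu=\cE_\alpha(w,u^2).
\]
It remains to prove the key bound $\cE_\alpha(w,u^2)\le\|w\|_\infty\,\cE_\alpha(u,u)$. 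Recalling that here $\cE$ is a pure-jump form (no strongly local or killing part), we have $\cE_\alpha(w,u^2)=\cE(w,u^2)+\alpha\int_{\bR^d}wu^2\,dx$, and the second term is harmless since $\alpha\int wu^2\,dx\le\|w\|_\infty\cdot\alpha\|u\|_{L^2}^2\le\|w\|_\infty\,\cE_\alpha(u,u)$; so everything comes down to $\cE(w,u^2)\le\|w\|_\infty\,\cE(u,u)$, which is the Stollmann--Voigt lemma \cite{SV} and which I would carry out following that reference. I expect this to be the main obstacle. The essential point is that $w$ enters as the $\alpha$-potential $R_\alpha\nu$ --- equivalently, $w$ is $\alpha$-excessive, so $w-e^{-\alpha t}p_tw\ge0$ and $\cE_\alpha(w,\cdot)$ behaves like a positive functional --- used together with the Markov property of $(\cE,\cD(\cE))$ applied to $u$ and its truncations; the analogous inequality fails for a general bounded $g$ with $0\le g\le\|g\|_\infty$ in place of $w$, and a naive pointwise or Cauchy--Schwarz bound is too lossy because $\iint(u(x)-u(y))^2$ against the jump kernel is controlled only jointly with the $w$-increments. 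Combining the representation with this bound and undoing the two reductions yields \eqref{SV}.
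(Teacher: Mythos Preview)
The paper gives no proof of this lemma; it is quoted from \cite{SV}. Your reductions and the representation $\int u^2\,d\nu=\cE_\alpha(R_\alpha\nu,u^2)$ are correct and are exactly how the Stollmann--Voigt argument begins.

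The gap is in your last step. After splitting $\cE_\alpha=\cE+\alpha(\cdot,\cdot)_{L^2}$ you assert that ``everything comes down to $\cE(w,u^2)\le\|w\|_\infty\,\cE(u,u)$'' and call this the Stollmann--Voigt lemma. That inequality is \emph{false} for $\alpha$-excessive $w$, and it is not what \cite{SV} proves. A two-point pure-jump counterexample: take $E=\{1,2\}$ with counting measure, $\cE(f,g)=(f(1)-f(2))(g(1)-g(2))$, $\nu=\delta_1$, $\alpha=10$; solving $(\alpha-L)w=\delta_1$ gives $w=R_\alpha\delta_1=(\tfrac{11}{120},\tfrac{1}{120})$, so $\|w\|_\infty=\tfrac{11}{120}$ and $w(1)-w(2)=\tfrac{1}{12}$. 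With $u=(2,1)$ one gets $\cE(w,u^2)=\tfrac{1}{12}\cdot 3=\tfrac{1}{4}$, while $\|w\|_\infty\,\cE(u,u)=\tfrac{11}{120}\cdot 1=\tfrac{11}{120}<\tfrac{1}{4}$. (By contrast the full inequality does hold here: $\cE_\alpha(w,u^2)=\int u^2\,d\nu=4$ and $\|w\|_\infty\,\cE_\alpha(u,u)=\tfrac{11}{120}\cdot 51=\tfrac{561}{120}>4$.) The point is that $\alpha$-excessivity of $w$ is a statement about $e^{-\alpha t}p_t$, not about the jump kernel alone, so it cannot control the pure jump part $\cE(w,\cdot)$ separately.

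The fix is simply not to split: \cite{SV} establishes $\cE_\alpha(w,u^2)\le\|w\|_\infty\,\cE_\alpha(u,u)$ in one stroke, working with the approximating forms $t^{-1}(f-e^{-\alpha t}p_tf,g)$ where the $\alpha$-excessivity of $w=R_\alpha\nu$ and the sub-Markov property of $p_t$ enter jointly. With that correction in place, the rest of your outline goes through unchanged.
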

 For $\nu \in \bfK$, we see that $\cF$ is included in $L^2(\mu)$. 
Let $(N,H)$ be a L\'evy system for $\bfM$, that is,
\begin{align}
 N(x,dy) &= 2C(d,-\alpha) \frac{\psi(m^{1/\alpha}|x-y|)}{|x-y|^{d+\alpha}}dy,
 \quad H_t = t, \quad t > 0.
\end{align}
It is known that the jump measure $J(dxdy)$
of the Dirichlet form equals $(1/2)N(x,dy)\mu_H(dx)$ in the
general theory of Hunt processes.  
In our setting, note that the Revuz measure
associated with $H_t$ is $\mu_H(dx) = dx$ since $H_t = t$.
Then for any non-negative Borel function $F(x,y)$ on $E \times E$
vanishing on the diagonal set $d = \{(x,x) : x \in \bR^d\}$, 
\begin{align}
 \bE_x\left[\sum_{s \le t} F(X_{s-},X_s)\right]
 = \bE_x\left[\int_{0}^{t} \int_{\bR^d}
 F(X_s,y)N(X_s,dy) ds\right].
 \la{Levy-sys}
\end{align}
Put 
\begin{align}
 NF(x) = \int_{\bR^d} F(x,y) N(x,dy) \quad
 \text{and} \quad \xi_{F}(dx) := NF(x) \cdot dx.
\end{align}
Remark that $\xi_F$ is the Revuz measure of the dual predictable projection
of purely discontinuous additive functional
$A_t := \sum_{s \le t}F(X_{s-},X_s)$.

\begin{definition}
 \la{def-nlp}
 Let $F$ be a symmetric non-negative bounded measurable function on
 $\bR^d \times \bR^d$ with $F(x,x) = 0$ for any $x \in \bR^d$. 
 The function $F$ is said to be in the class $\bfJ$
 if the measure $\xi_{F}$ belongs to $\bfK$.
\end{definition}

For a bounded measurable function $F(x,y)$ on $\bR^d \times \bR^d$
vanishing on the diagonal set,
we put $F^+ = (F \vee 0)$ and $F^{-} = -(F \wedge 0)$.
Then $F = F^+ - F^-$ and $|F| = F^+ + F^-$. 
Following \cite{Li}, set
\begin{align}
 G^{-} := 1 - e^{-F^{-}}, \quad
 G^{+} := \left(e^{F^+} - 1\right) e^{-F^{-}},
\end{align}
and write $G := G^+ - G^-$. Then we easily see $G(x,y) = e^{F(x,y)} - 1$. 
On account of the boundedness of $F$, it is easy to know
there exists a constant
$C > 0$ such that for any $x,y \in \bR^d \times \bR^d$, 
\begin{align}
 C^{-1} F^{+}(x,y) \le G^+(x,y) \le CF^{+}(x,y),
 \quad C^{-1} F^{-}(x,y) \le G^{-}(x,y)
 \le CF^{-}(x,y).
\end{align}
Let $\mu = \mu^+ - \mu^-$ be a signed smooth measure. 
In the sequel, we assume that
$|\mu| = \mu^+ + \mu^- \in \bfK$
and $|F| = F^+ + F^- \in \bfJ$. 
 Let us consider the following non-local Schr\"odinger type operator
 with non-local perturbation:
 \begin{align}
  \cH^{\mu,F} f
  = \cH f  + \mu f + \int_{\bR^d} (e^{F(x,y)} - 1)f(y) N(x,dy) dx,
 \end{align}
 where $\cH$ is the Markov generator of $\bfM$. 
 Let $A_t^F$ be the pure jump additive functional generated
 by $F$, that is, 
 \begin{align}
  A_t^F = \sum_{s \le t} F(X_{s-}, X_s).
 \end{align}
 Then the Feynman-Kac semigroup associated with $\bfM$, $\mu$ and $F$
 is represented as follows:
 \begin{align}
  p_t^{\mu,F}f(x) = \bE[\exp(A_t^{\mu} + A_t^F)f(X_t)]. 
 \end{align}
 Song in \cite{Song} showed the following theorem in the case of
 symmetric $\alpha$-stable processes on $\bR^d\ (d \ge 2)$.
 But the argument is applicable for general symmetric Hunt processes with
 the strong Feller property.
 In the sequel, we occasionally use the notation (SF) as
 the strong Feller property of transition semigroups.
 Since the semigroup $\{p_t, t \ge 0\}$ of $\bfM$ satisfies (SF),
 we obtain the following theorem. 

 \begin{theorem}[{cf. \cite[Theorem 2.6]{Song}}]
  \la{thm-Song}
  Suppose that $|\mu| \in \bfK$ and $|F| \in \bfJ$.
  Then the Feynman-Kac semigroup $\{p_t^{\mu,F}\}$ has
  the strong Feller property, namely, 
  for any $t > 0$, $p_{t}^{\mu,F} (\cB_b) \subset C_b$.
 \end{theorem}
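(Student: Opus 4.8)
The plan is to adapt the argument of Song \cite{Song}, which requires only the strong Feller property (SF) of the free semigroup $\{p_t\}$ --- which holds for $\bfM$ --- together with the Kato hypotheses $|\mu|\in\bfK$, $|F|\in\bfJ$. First I would record the uniform bounds. Pathwise $A_t=A_t^{\mu}+A_t^{F}\le A_t^{\mu^+}+A_t^{F^+}=:B_t$, and since $\mu^+\le|\mu|\in\bfK$ and, as $F^+\le|F|$ forces $\xi_{F^+}\le\xi_{|F|}$, also $\xi_{F^+}\in\bfK$, the L\'evy-system identity \eqref{Levy-sys} gives $\bE_x[A_t^{F^+}]=\bE_x[A_t^{\xi_{F^+}}]$, so that Theorem \ref{thm-Kato-class}(3) yields $\lim_{t\downarrow0}\sup_x\bE_x[B_t]=0$. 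A Khasminskii-type lemma adapted to additive functionals with jumps --- iterating the Markov property and compensating the jump part through its dual predictable projection --- then gives $\sup_x\bE_x[e^{B_t}]<\infty$ for every $t$, hence $C_t:=\sup_{s\le t}\sup_x\bE_x[e^{A_s}]<\infty$; in particular $\|p_s^{\mu,F}g\|_\infty\le C_t\|g\|_\infty$ for $g\in\cB_b$, $s\le t$, and $p_t^{\mu,F}(\cB_b)\subset\cB_b$. I would also note $\sup_x\bE_x[A_t^{|\mu|}]<\infty$ and $\sup_x\bE_x\bigl[\int_0^t N|F|(X_s)\,ds\bigr]<\infty$ for each fixed $t$.

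Next I would derive a Duhamel identity. As $\bE_x\bigl[\sum_{s\le t}|F(X_{s-},X_s)|\bigr]=\bE_x\bigl[\int_0^t N|F|(X_s)\,ds\bigr]<\infty$, the map $s\mapsto A_s$ has a.s.\ finite variation, and the It\^o product formula gives
\begin{align*}
 e^{A_t}=1+\int_0^t e^{A_t-A_s}\,dA_s^{\mu}+\sum_{s\le t}e^{A_t-A_s}\bigl(e^{F(X_{s-},X_s)}-1\bigr).
\end{align*}
Since $e^{A_t-A_s}f(X_t)=\bigl[e^{A_{t-s}}f(X_{t-s})\bigr]\circ\theta_s$, one has $\bE_x[e^{A_t-A_s}f(X_t)\mid\cF_s]=p_{t-s}^{\mu,F}f(X_s)$; inserting this inside the $dA_s^{\mu}$-integral (legitimate because $A^{\mu}$ is continuous and adapted) and compensating the residual jump sum $\sum_{s\le t}(e^{F(X_{s-},X_s)}-1)p_{t-s}^{\mu,F}f(X_s)$ through the L\'evy system \eqref{Levy-sys}, I would obtain, for $f\in\cB_b^+$,
\begin{align*}
 p_t^{\mu,F}f(x)
 &= p_tf(x)+\bE_x\Bigl[\int_0^t p_{t-s}^{\mu,F}f(X_s)\,dA_s^{\mu}\Bigr]\\
 &\quad+\bE_x\Bigl[\int_0^t\!\!\int_{\bR^d}\bigl(e^{F(X_s,y)}-1\bigr)p_{t-s}^{\mu,F}f(y)\,N(X_s,dy)\,ds\Bigr]
 =:p_tf(x)+T_2f(x)+T_3f(x).
\end{align*}

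Then I would prove continuity. By (SF), $p_tf\in C_b$. Fixing $f\in\cB_b^+$, putting $g_s:=p_{t-s}^{\mu,F}f$ (so $\|g_s\|_\infty\le C_t\|f\|_\infty$), and $0<\epsilon<t$, I would split each time-integral as $\int_0^\epsilon+\int_\epsilon^t$. For $T_2$ (with $dA_s^{\mu^+}$, $dA_s^{\mu^-}$ treated separately), the $[0,\epsilon]$-part is bounded in sup-norm by $C_t\|f\|_\infty\sup_z\bE_z[A_\epsilon^{|\mu|}]\to0$ by Theorem \ref{thm-Kato-class}(3), while by the Markov property at time $\epsilon$ the $[\epsilon,t]$-part equals $p_\epsilon\Xi_\epsilon$ with $\Xi_\epsilon(z):=\bE_z\bigl[\int_0^{t-\epsilon}g_{r+\epsilon}(X_r)\,dA_r^{\mu^\pm}\bigr]$ bounded, hence $p_\epsilon\Xi_\epsilon\in C_b$; thus $T_2f$ is a uniform limit of $C_b$-functions as $\epsilon\downarrow0$, so $T_2f\in C_b$. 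For $T_3$, boundedness of $F$ gives $|e^{F(x,y)}-1|\le C|F(x,y)|$, so the inner integral is dominated by $\psi(z):=C\,C_t\|f\|_\infty\,N|F|(z)$ with $\psi\,dz=C\,C_t\|f\|_\infty\,\xi_{|F|}\in\bfK$, and the same $\epsilon$-splitting --- the $[0,\epsilon]$-part bounded by $\sup_z\bE_z\bigl[\int_0^\epsilon\psi(X_s)\,ds\bigr]\to0$ and the $[\epsilon,t]$-part written as $p_\epsilon$ of a bounded function --- gives $T_3f\in C_b$. Hence $p_t^{\mu,F}f=p_tf+T_2f+T_3f\in C_b$ for $f\in\cB_b^+$, and by linearity for all $f\in\cB_b$.

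I expect the main obstacle to be the rigorous derivation of the Duhamel identity for the non-local term $T_3$: because the weight $e^{A_t-A_s}$ straddles the jump at time $s$, one has to take the conditional expectation at each jump time first and compensate the jump sum via the L\'evy system \eqref{Levy-sys} only afterwards, which needs care with the time-dependence of the integrand (a version with time-dependent kernels, or a limiting argument). The Khasminskii-type exponential bound of the first step --- which for $A^{F^+}$ must be run through the compensator $\xi_{F^+}$ rather than on $A^{F^+}$ itself --- is the other technical point, but it is by now a standard feature of Kato-class non-local perturbations. Everything else is routine bookkeeping with the Kato-class estimates of Theorem \ref{thm-Kato-class} and (SF).
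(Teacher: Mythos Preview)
The paper does not actually give a proof of this theorem: it simply cites \cite[Theorem~2.6]{Song}, remarks that Song's argument ``is applicable for general symmetric Hunt processes with the strong Feller property,'' and observes that the free semigroup $\{p_t\}$ of $\bfM$ has (SF). Your proposal goes well beyond this, correctly reconstructing the main ingredients of Song's proof --- the Khasminskii-type exponential bound from the Kato hypotheses, the Duhamel (variation-of-constants) identity for $p_t^{\mu,F}$, and the $\epsilon$-splitting of the time integrals to push everything through $p_\epsilon$ and invoke (SF). The two technical points you flag (the compensation of the time-dependent jump term via the L\'evy system, and running Khasminskii through the compensator $\xi_{F^+}$) are exactly the places where Song's argument requires care, and they are handled there; so your sketch is faithful to the cited source, and more detailed than what the paper itself provides.
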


 This semigroup corresponds to the formal
 Schr\"odinger type operator
 \begin{align}
  \cH^{\mu,F}f(x) := \cH f(x) + \mu f(x) +
  \int_{\bR^d}\left(e^{F(x,y)} - 1\right) f(y) N(x,dy) \quad
  \text{for}\ f \in \cB_b(\bR^d). 
 \end{align}
  Hence
 the associated quadratic form is given by 
 \begin{align}
  \cE^{\mu,F}(u,u) &= (-\cH^{\mu,F}u,u)_2 \\
  &= \cE^{}(u,u) - \iint_{\bR^d \times \bR^d}
  u(x)u(y) (e^{F(x,y)} - 1) N(x,dy) dx - \int_{\bR^d} u^2 d\mu,
  \la{p-form}
 \end{align}
 where $(f,g)_2$ means the canonical $L^2$-inner product
 $\int_{\bR^d} f(x) g(x) dx$.
 Noting that $G(x,y) = e^{F(x,y)} - 1$, we see
 \begin{align}
  \cE^{\mu,F} (u,u) &= \cE^{}(u,u) + \int_{\bR^d} u(x)^2
  \mu^-(dx) + \iint_{\bR^d \times \bR^d}
  u(x) u(y) G^-(x,y) N(x,dy) dx \\
  &\hspace{1cm} - \left(\int_{\bR^d} u(x)^2 \mu^+(dx)
  + \iint_{\bR^d \times \bR^d} u(x) u(y) G^+(x,y) N(x,dy) dx\right).
 \end{align}
 We put $\rho^{\pm} := \mu^{\pm} + \xi_{G^{\pm}}$, respectively.
 Since 
 \begin{align}
  &\hspace{-0.5cm} \iint_{\bR^d} u(x) u(y) G^{\pm}(x,y) N(x,dy) dx \\
  &= \int_{\bR^d} u(x)^2 \xi_{G^{\pm}}(dx)
  - \frac{1}{2} \iint_{\bR^d \times \bR^d} (u(x) - u(y))^2 G^{\pm}(x,y)
  N(x,dy) dx
 \end{align}
 by the Fubini's theorem and symmetry of $G^{\pm}(x,y)N(x,dy)dx$, 
 we know that
  \begin{align}
   \begin{split}
  & \hspace{-0.5cm} \cE^{\mu,F}(u,u) \\
  &= \cE^{}(u,u) + \int_{\bR^d} u(x)^2 \rho^-(dx)
  - \frac{1}{2} \iint_{\bR^d \times \bR^d} (u(x) - u(y))^2 G^{-}(x,y)
  N(x,dy) dx \\
  &\hspace{1cm} + \frac{1}{2}\iint_{\bR^d \times \bR^d}
    (u(x) - u(y))^2 G^+(x,y) N(x,dy) dx - \int_{\bR^d} u(x)^2 \rho^+(dx).
   \end{split}
   \la{expr}
  \end{align}
  Note that $\rho^{\pm} \in \bfK$ because $\mu^{\pm}$ and $\xi_{G^{\pm}}$
  belong to $\bfK$.
 
 Now, put $A_t^{\pm} = A_t^{\mu^{\pm}} + A_t^{F^{\pm}}$, respectively. 
 Let $\bfM^{-} = (\bP_x^-,X_t)$ be the subprocess 
 killed by $A^{-}$.
 Then the semigroup of $\bfM^-$ becomes
 \begin{align}
  p_t^- f(x) := \bE_x\left[\exp(-A_t^-) f(X_t)\right] =
  \bE_x\left[\exp(-(A_t^{\mu^-} + A_t^{F^-}))
  f(X_t)\right] \quad \text{for } f \in \cB^+,
 \end{align}
 The associated Dirichlet form is expressed by 
  \begin{align}
 \cE^{-}(u,u) &:= \cE^{-\mu^-,-F^-}(u,u) \\
 &= \cE^{}(u,u)
 + \int_{\bR^d} u(x)^2 \rho^-(dx) - \frac{1}{2}\iint_{\bR^d \times \bR^d}
 (u(x) - u(y))^2 G^{-}(x,y) N(x,dy) dx \\
 &\left(= \iint_{\bR^d \times \bR^d} (u(x) - u(y))^2 e^{-F^-(x,y)} N(x,dy) dx
 + \int_{\bR^d} u(x)^2 \rho^-(dx)\right)
  \end{align}
  for $u \in \cF$.
  By the boundedness of $F^{-}$ and Lemma \ref{lem-Poincare-SV},
  it is easy to see that
  there exists a constant $C > 0$ such that 
  \begin{align}
   C^{-1} \cE_1(u,u) \le \cE_1^{-}(u,u) \le C \cE_1(u,u)
   \quad \text{for any }u \in \cF,
  \end{align}
  on account of $\rho^- \in \bfK$. Hence the domain of $\cE^-$ equals $\cF$ and
  $(\cE^-,\cF)$ is a regular Dirichlet form by the regularity of $(\cE,\cF)$.
   It is clear that the following lemma holds.
   \begin{lemma}
    \la{lem-kproc}
    The killed process $\bfM^-$ is a symmetric and transient
    Hunt process with (SF). 
   \end{lemma}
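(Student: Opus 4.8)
The plan is to dispose first of the ``soft'' assertions --- that $\bfM^-$ is a Hunt process, $dx$-symmetric and enjoys (SF) --- and then to concentrate on transience, which is the only point requiring real work.

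For the soft part: $\bfM^-$ is the subprocess of the Hunt process $\bfM$ obtained by killing with the multiplicative functional $M_t=\exp(-A_t^-)$, where $A_t^-=A_t^{\mu^-}+\sum_{s\le t}F^-(X_{s-},X_s)$ is a positive additive functional (a PCAF plus a purely discontinuous AF); since $M$ is right-continuous, decreasing, $[0,1]$-valued and multiplicative with $M_0=1$, the subprocess is again a Hunt process (see \cite{BG}). Its $dx$-symmetry is already implicit in what precedes the lemma, as $(\cE^-,\cF)=(\cE^{-\mu^-,-F^-},\cF)$ is a symmetric regular Dirichlet form on $L^2(\bR^d;dx)$ whose associated semigroup is precisely $\{p_t^-\}$. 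Finally, (SF) follows from Theorem \ref{thm-Song} applied with $-\mu^-$ and $-F^-$ in place of $\mu$ and $F$: this is legitimate because $\mu^-\le|\mu|\in\bfK$ gives $\mu^-\in\bfK$, while $F^-\le|F|$ together with $|F|\in\bfJ$ gives $\xi_{F^-}\le\xi_{|F|}\in\bfK$, hence $F^-\in\bfJ$; and since $p_t^{-\mu^-,-F^-}=p_t^-$ this yields $p_t^-(\cB_b)\subset C_b$.

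For transience I work at the level of the Dirichlet form. From the identity
\begin{align}
 \cE^-(u,u)=\frac12\iint_{\bR^d\times\bR^d}(u(x)-u(y))^2 e^{-F^-(x,y)}\,N(x,dy)\,dx+\int_{\bR^d}u^2\,d\rho^-
\end{align}
one reads off the Beurling--Deny decomposition of $(\cE^-,\cF)$: no diffusion part, jump measure $J^-(dx,dy)=e^{-F^-(x,y)}N(x,dy)\,dx$, killing measure $\rho^-=\mu^-+\xi_{G^-}$. Because $e^{-F^-(x,y)}\ge e^{-\|F^-\|_\infty}>0$ and $N(x,dy)$ has a strictly positive density off the diagonal, $J^-$ has a strictly positive density with respect to $dx\,dy$ on $\bR^d\times\bR^d\setminus\mathsf{diag}$, so $(\cE^-,\cF)$ is irreducible. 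It cannot be recurrent: recurrence would force the constant $1$ into the extended Dirichlet space with zero form value, whereas the Beurling--Deny formula gives $\cE^-(1,1)=\rho^-(\bR^d)>0$, the negative part of the perturbation being non-trivial. By the dichotomy between recurrence and transience for irreducible symmetric Dirichlet forms, $(\cE^-,\cF)$ is therefore transient, i.e. $\bfM^-$ is a transient Hunt process.

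The only step that genuinely needs attention is the strict inequality $\rho^-(\bR^d)>0$, i.e. that the negative part of the perturbation does not vanish at the Dirichlet-form level ($\mu^-+\xi_{F^-}\not\equiv0$); this is part of the standing hypotheses on $\mu$ and $F$, and once it is granted the dichotomy argument is conclusive. All the rest --- the subprocess being Hunt, the identification of $\{p_t^-\}$ with the semigroup of the regular Dirichlet form $(\cE^-,\cF)$, irreducibility, and the applicability of the dichotomy --- is routine given the material already assembled.
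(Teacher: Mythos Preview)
Your proof is correct. The paper itself offers no argument at all --- the lemma is prefaced by ``It is clear that the following lemma holds'' --- so there is nothing to compare against. Your treatment of the Hunt property, symmetry, and (SF) via Theorem~\ref{thm-Song} is exactly what the surrounding text makes available, and your transience argument (irreducibility from the strictly positive jump kernel, then the recurrence/transience dichotomy together with the non-trivial killing measure $\rho^-$) is a clean way to pin it down.

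One organisational point: the non-triviality $\mu^-\not\equiv 0$, $F^-\not\equiv 0$ that you need for $\rho^-(\bR^d)>0$ is only stated explicitly at the start of Section~3, after the lemma. Your flagging of this dependence is appropriate; the paper is evidently relying on it implicitly here (otherwise $\bfM^-=\bfM$ would be recurrent and the lemma false), so this is a lacuna in the paper's exposition rather than in your argument.
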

   By (SF), there exists the density of Green kernel 
   of $\bfM^-$, that is, the Green function.
   Let $R^-(x,y)$ denotes the Green function ($0$-resolvent kernel) of $\bfM^-$,
 that is,
 \begin{align}
  \bE_x\left[\int_{0}^{\infty} e^{-(A_t^{\mu^-} + A_t^{F^-})} f(X_t) dt\right]
  = \int_{\bR^d} R^-(x,y) f(y)dy \quad \text{for } f \in \cB^+(\bR^d).
 \end{align}
 The L\'evy system $(N^{-}, H^{-})$ of $\bfM^{-}$ becomes 
 \begin{align}
  N^{-}(x,dy) = (1 - G^{-}(x,y)) N(x,dy) = e^{-F^{-}(x,y)} N(x,dy), \quad
  H_t^- = H_t = t, \quad t > 0.
 \end{align}

 Let us define the classes of measure and functions with a Green-tightness property.
 We define the $0$-potential of a measure $\eta$ with respect to $\bfM^-$ as follows:
 \begin{align}
  R^{-} \eta := \int_{\bR^d} R^-(x,y) \eta(dy).
 \end{align}
 \begin{definition}
  \la{def-tight}
  \begin{enumerate}[{\rm (1)}]
   \item A smooth measure $\nu$ on $\bR^d$ is in $\bfK_{\infty}^-$ if
	 it is in $\bfK$ and satisfies
	 \begin{align}
	  \lim_{a \to \infty} \sup_{x \in \bR^d} \int_{|y| \ge a} R^-(x,y) \nu(dy) = 0.
	 \end{align}
   \item A non-negative measurable function $F(x,y)$ on $\bR^d \times \bR^d$ is in
	 $\bfJ_{\infty}^-$ if $\xi_{F} \in \bfK_{\infty}^-$. 
  \end{enumerate}
 \end{definition}

 In the sequel, we assume that $\mu^+ \in \bfK_{\infty}^-$
 and $F^+ \in \bfJ_{\infty}^-$.
 We summarize our assumptions:
  \begin{align}
    \begin{cases}
     \text{(killing part)}\ \mu^- \in \bfK,\ F^- \in \bfJ, \\
     \text{(creation part)}\ \mu^+ \in \bfK_{\infty}^-\
     (\subset \bfK),\ 
    F^+ \in \bfJ_{\infty}^-\ (\subset \bfJ). 
    \end{cases}
   \la{ass-01}
  \end{align}

\begin{lemma}
 \la{lem-Green-bdd}
 Suppose that $\eta$ is in $\bfK_{\infty}^-$.
 Then $R^{-} \eta$ is bounded. 
\end{lemma}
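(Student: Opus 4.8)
The plan is to use Green-tightness to localize $\eta$ to a fixed ball, to reduce the supremum over $\bR^d$ to a supremum over that ball by the strong Markov property, and then to bound the resulting Green potential by splitting near and far from the diagonal. Since $\eta\in\bfK_\infty^-$, Definition \ref{def-tight} gives $a>0$ with $\sup_{x}\int_{\{|y|\ge a\}}R^-(x,y)\,\eta(dy)\le 1$. Write $\eta=\eta_1+\eta_2$ with $\eta_1:=\eta|_{\{|y|<a\}}$ and $\eta_2:=\eta|_{\{|y|\ge a\}}$, so that $R^-\eta_2\le 1$ and it suffices to bound $R^-\eta_1$. A Kato measure is uniformly locally finite --- this is immediate from \eqref{D:Kato} and the lower bound for $R_K$ near the diagonal when $d\ge\alpha$, and is exactly Remark \ref{R:Kato} when $\alpha>d=1$ --- so $\eta_1$ is a finite measure carried by the compact set $K:=\overline{B(0,a)}$.

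\textbf{Reduction to $K$.} Let $A^{\eta_1}$ be the PCAF of $\bfM^-$ with Revuz measure $\eta_1$; its fine support lies in $K$, so it increases only while $X_t\in K$. For $x\notin K$ put $\sigma_K=\inf\{t\ge 0:X_t\in K\}$; since $K$ is closed and $\bfM^-$ is a pure-jump process, $\sigma_K>0$ and $X_{\sigma_K}\in K$ on $\{\sigma_K<\zeta^-\}$, where $\zeta^-$ is the lifetime of $\bfM^-$. Hence $A^{\eta_1}_\infty=A^{\eta_1}_\infty\circ\theta_{\sigma_K}$ on $\{\sigma_K<\zeta^-\}$ and $A^{\eta_1}_\infty=0$ otherwise, so by the strong Markov property
\[
 R^-\eta_1(x)=\bE^-_x\!\left[A^{\eta_1}_\infty\right]=\bE^-_x\!\left[\mathbf 1_{\{\sigma_K<\zeta^-\}}R^-\eta_1(X_{\sigma_K})\right]\le\sup_{z\in K}R^-\eta_1(z),
\]
and therefore $\sup_{x\in\bR^d}R^-\eta_1(x)=\sup_{z\in K}R^-\eta_1(z)$; it remains to bound the right-hand side.

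\textbf{Bounding the potential on $K$.} Fix $\delta\in(0,1]$ and, for $z\in K$, split $R^-\eta_1(z)=\int_{K\cap\{|z-y|<\delta\}}R^-(z,y)\,\eta(dy)+\int_{K\cap\{|z-y|\ge\delta\}}R^-(z,y)\,\eta(dy)$. By the strong Feller property of $\bfM^-$ (Lemma \ref{lem-kproc}), $R^-$ is continuous off the diagonal, hence bounded by some $M_\delta<\infty$ on the compact set $\{(z,y)\in K\times K:|z-y|\ge\delta\}$, so the second integral is $\le M_\delta\,\eta(K)<\infty$. For the first integral I use the local estimate $R^-(z,y)\le C\,R_K(z,y)$ for $|z-y|\le 1$, with $R_K$ as in \eqref{eq:Kato}: then it is $\le C\sup_{x}\int_{\{|x-y|<\delta\}}R_K(x,y)\,\eta(dy)$, which is finite (indeed $\to0$ as $\delta\to0$) because $\eta\in\bfK$. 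When $\alpha>d=1$ the kernel $R_K$ vanishes on the diagonal; there one argues instead that $R^-$ is continuous up to the diagonal, hence bounded on $K\times K$, and uses $\eta(B(z,\delta))\le\sup_x\eta(B(x,1))<\infty$ from Remark \ref{R:Kato}. Either way $\sup_{z\in K}R^-\eta_1(z)<\infty$, and finally $\sup_x R^-\eta(x)\le\sup_x R^-\eta_1(x)+1<\infty$.

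\textbf{Main difficulty.} The only substantial input is the local comparison $R^-(z,y)\le C\,R_K(z,y)$ near the diagonal, i.e. the fact that the $0$-order Green function of the killed process $\bfM^-$ inherits the diagonal singularity of the free kernel $R_K$. One cannot dominate $R^-$ by a resolvent kernel $r_\beta$ of $\bfM$ directly, since the resolvent identity $R^-=r_\beta^-+\beta\,r_\beta^-R^-$ reintroduces a term that need not be globally bounded (consistently, $R^-$ itself is not globally bounded in general, e.g. when $\alpha>d=1$); the comparison is genuinely local, obtainable by a bootstrap on $R^-(z,y)=r_\beta^-(z,y)+\beta\int R^-(z,w)r_\beta^-(w,y)\,dw$ from the small-time heat-kernel estimates of the (relativistic) $\alpha$-stable process, or by quoting the corresponding estimates from \cite{KT}. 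The remaining ingredients --- the tightness bound, the strong Markov reduction, and the off-diagonal part --- are routine.
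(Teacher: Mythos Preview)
Your argument is a direct, essentially self-contained proof, whereas the paper's proof is a two-line citation: it observes that $\eta$ is Kato with respect to $\bfM^-$ (since $\bE_x^-[A_t^\eta]\le\bE_x[A_t^\eta]$), invokes \cite[Lemma 4.1(2)]{KK0} to identify the class $\bfK_\infty^-$ with Chen's Green-tight class, and then quotes \cite[Proposition 2.2]{Ch} for the boundedness of $R^-\eta$. Your approach is in effect a reproof of that cited proposition in this concrete setting: localize by Green-tightness, reduce to the compact support via the strong Markov property, and then split the potential near and far from the diagonal. The gain is transparency; the cost is that you must supply pointwise Green-kernel estimates that the paper avoids entirely by citing \cite{Ch}.

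There is one loose end. You write that ``by the strong Feller property of $\bfM^-$, $R^-$ is continuous off the diagonal.'' The strong Feller property (Lemma \ref{lem-kproc}) is a statement about $p_t^-$ or $R_\alpha^-$ acting on bounded \emph{functions}; it does not by itself yield pointwise regularity of the kernel $R^-(x,y)$. Since $\bfM$ is recurrent, you also cannot simply dominate $R^-\le\int_0^\infty p_t\,dt$. What you actually need here is the same heat-kernel input you already invoke for the near-diagonal part: from $p_t^-(x,y)\le p_t(x,y)$ and the known small- and large-time bounds for the (relativistic) $\alpha$-stable transition density one gets a uniform bound on $R^-(x,y)$ over $\{(x,y)\in K\times K:|x-y|\ge\delta\}$ (and, in the case $\alpha>d=1$, all the way up to the diagonal). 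Once you phrase both the on- and off-diagonal steps as consequences of the explicit heat-kernel estimates, the argument is complete; but as written, attributing the off-diagonal bound to ``strong Feller'' is not a valid justification.
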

\begin{proof}
Note that if $\eta \in \bfK_{\infty}^-$,
\begin{align}
 \lim_{t \to 0} \sup_{x \in \bR^d} \bE_x^{-}[A_t^{\eta}]
 \le \lim_{t \to 0} \sup_{x \in \bR^d} \bE_x [A_t^{\eta}] = 0.
\end{align}
 Thus $\eta$ is the Kato class with respect to $\bfM^-$. 
By \cite[Lemma 4.1 (2)]{KK0}, $\bfK_{\infty}^-$ is identical to the class
introduced in \cite[Definition 2.2 (1)]{Ch}.
 Using \cite[Proposition 2.2]{Ch}, we obtain this lemma.
\end{proof}

Since $F^+ \in \bfK_{\infty}^-$,
\begin{align}
 &\hspace{-0.5cm} \sup_{x \in \bR^d} \int_{|y| \ge a} R^{-}(x,y) \xi_{G^+}(dy) \\
 &= \sup_{x \in \bR^d} \int_{|y| \ge a} R^{-}(x,y) NG^+(y) dy
 \le C \sup_{x \in \bR^d} \int_{|y| \ge a} R^{-}(x,y) NF^+(y)dy \\
 &= C \sup_{x \in \bR^d} \int_{|y| \ge a} R^{-}(x,y) \xi_{F^+}(dy)
 \to 0 \quad \text{as }a \to \infty.
\end{align}
Hence, $\xi_{G^+} \in \bfK_{\infty}^-$, so does $\rho^+$. 
 On account of Lemma \ref{lem-Green-bdd}, 
 $\rho^+$ is $R^{-}$-bounded, that is,
 \begin{equation}
  \sup_{x \in \bR^d} R^{-}(\eta)(x)
   := \sup_{x \in \bR^d}
   \int_{\bR^d}R^{-}(x,y) \eta(dy) = \sup_{x \in \bR^d} \bE_x^{-}
   [A_{\infty}^{\eta}]  < \infty.
   \la{GB}
 \end{equation}

 Next, we transform $\bfM^-$ by $\exp(A_t^{+})$.
 \begin{align}
  p_t^{\mu,F} f(x) = \bE_x^-[e^{A_t^+} f(X_t)]
  = \bE_x\left[e^{-A_t^-} e^{A_t^+} f(X_t) \right]
 \end{align}

 \begin{proposition}
  \la{prop-semibdd}
  Assume that $\rho^- \in \bfK$ and $\rho^+ \in \bfK_{\infty}^-$.
  Then the quadratic form $(\cE^{\mu,F},\cF)$ is lower semi-bounded.
 \end{proposition}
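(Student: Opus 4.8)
The plan is to isolate the only sign-indefinite contribution in $\cE^{\mu,F}$ and dominate it by means of the Stollmann--Voigt inequality. Comparing the expression \eqref{expr} for $\cE^{\mu,F}$ with the one for the killed form $\cE^{-}$ recorded just above, one has, for every $u\in\cF$,
\begin{align}
 \cE^{\mu,F}(u,u)=\cE^{-}(u,u)+\frac12\iint_{\bR^d\times\bR^d}(u(x)-u(y))^2 G^+(x,y)\,N(x,dy)\,dx-\int_{\bR^d}u^2\,d\rho^+.
\end{align}
The middle term is non-negative, while $\cE^{-}(u,u)=\iint(u(x)-u(y))^2 e^{-F^-(x,y)}N(x,dy)\,dx+\int u^2\,d\rho^-\ge 0$ since $\rho^-$ is a positive measure. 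Hence it suffices to prove an estimate of the form $\int u^2\,d\rho^+\le\tfrac12\cE^{-}(u,u)+C\|u\|_2^2$ with $C$ independent of $u$: substituting it gives $\cE^{\mu,F}(u,u)\ge\tfrac12\cE^{-}(u,u)-C\|u\|_2^2\ge-C\|u\|_2^2$, which is exactly lower semi-boundedness on $\cF$.

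To control the creation term I would first recall that $\rho^+=\mu^++\xi_{G^+}\in\bfK$ (both summands lie in $\bfK$, as noted just before the proposition). Lemma~\ref{lem-Poincare-SV} then applies to $\rho^+$ and gives, for every $\alpha>0$,
\begin{align}
 \int_{\bR^d}u^2\,d\rho^+\le\|R_\alpha\rho^+\|_\infty\,\cE_\alpha(u,u)=\|R_\alpha\rho^+\|_\infty\bigl(\cE(u,u)+\alpha\|u\|_2^2\bigr).
\end{align}
Next I would feed in the two-sided comparison $C^{-1}\cE_1(u,u)\le\cE_1^{-}(u,u)\le C\cE_1(u,u)$ established in the excerpt, which yields $\cE(u,u)\le C\,\cE^{-}(u,u)+(C-1)\|u\|_2^2$ and hence $\cE_\alpha(u,u)\le C\,\cE^{-}(u,u)+(C+\alpha)\|u\|_2^2$ for $\alpha\ge 1$; combining,
\begin{align}
 \int_{\bR^d}u^2\,d\rho^+\le C\|R_\alpha\rho^+\|_\infty\,\cE^{-}(u,u)+(C+\alpha)\|R_\alpha\rho^+\|_\infty\|u\|_2^2.
\end{align}

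Finally, since $\rho^+\in\bfK$, Theorem~\ref{thm-Kato-class} lets me fix one value $\alpha=\alpha_0\ge 1$ for which $C\|R_{\alpha_0}\rho^+\|_\infty\le\tfrac12$; setting $C_0:=(C+\alpha_0)\|R_{\alpha_0}\rho^+\|_\infty<\infty$, the last display becomes $\int u^2\,d\rho^+\le\tfrac12\cE^{-}(u,u)+C_0\|u\|_2^2$, and the conclusion follows as in the first paragraph with $C=C_0$. I expect the computation itself to be routine; the point that needs genuine care is the selection of the resolvent parameter — one must invoke the Kato \emph{smallness} of $\sup_{x}R_\alpha\rho^+(x)$ (so that the coefficient of $\cE^{-}(u,u)$ is pushed strictly below $1$), not merely its finiteness at a fixed $\alpha$. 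Conceptually, the estimate works because the Li decomposition has already absorbed the sign-indefinite jump contribution of $G^-$ into the genuinely non-negative form $\cE^{-}$, leaving only the honest Kato measure $\rho^+$ to be dominated.
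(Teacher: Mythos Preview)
Your proof is correct and follows essentially the same route as the paper: both start from the decomposition $\cE^{\mu,F}(u,u)=\cE^{-}(u,u)+\tfrac12\iint(u(x)-u(y))^2G^+N(x,dy)\,dx-\int u^2\,d\rho^+$, drop the non-negative middle term, and then absorb $\int u^2\,d\rho^+$ via the Stollmann--Voigt inequality together with the Kato smallness $\|R_\alpha\rho^+\|_\infty\to 0$ and the equivalence $\cE_1\sim\cE_1^-$. Your write-up is simply a more explicit unpacking of the paper's one-line claim that $\cE_\alpha^{\mu,F}\sim\cE_1^-$ for large $\alpha$; as you implicitly observe, only $\rho^+\in\bfK$ (not the full $\bfK_\infty^-$) is actually used at this step.
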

  \begin{proof}
   We have already known that the domain of $\cE^{-}$ is $\cF$ if $\rho^- \in \bfK$. 
   By the equation \eqref{expr}, for any $u \in \cF$,
    \begin{align}
     \begin{split}
    \cE^{\mu,F}(u,u) &= \cE^-(u,u) + \frac{1}{2}\iint_{\bR^d \times \bR^d} (u(x) - u(y))^2
    G^+(x,y) N(x,dy) dx - \int u^2 d\rho^+ \\
    &= \cE^{-}(u,u) + \frac{1}{2} \iint_{\bR^d \times \bR^d} (u(x) - u(y))^2
      \left(e^{F^+(x,y)} - 1\right) N^{-}(x,dy) dx - \int u^2 d\rho^+
     \end{split}
     \la{expr-2}
    \end{align}
   From $\rho^+ \in \bfK_{\infty}^-$ and Lemma \ref{lem-Poincare-SV} we can deduce
   $\cE_{\alpha}^{\mu,F} \sim \cE_1^-$ for a large $\alpha > 0$. Using the
   fact that $\cE_1^- \sim \cE_1$, this completes the proof.
  \end{proof}

  \section{The existence of ground states}
  Following arguments in \cite{Li} and \cite{Tsu2},
  we show the existence of ground states. 
 In the sequel, we only consider the case that
 $\mu^{\pm} \not\equiv 0$ and $F^{\pm} \not\equiv 0$.
 Let $(\cF_e,\cE^-)$ be the extended Dirichlet space of $(\cE^{-}, \cF)$.

 We define another quadratic form by
 \begin{align}
  \cE^Y(u,u) := \cE^{\mu,F}(u,u) + \int_{\bR^d} u^2 d\rho^+.
  \la{Y-form}
 \end{align}
 Then by the expression \eqref{expr-2}, $(\cE^F, \cF)$ is non-negative definite
 since 
 \begin{align}
  \cE^Y(u,u) &:= \cE^{-}(u,u) + \frac{1}{2} \iint_{\bR^d \times \bR^d}
  (u(x) - u(y))^2 G^{+}(x,y) N(x,dy) dx.
 \end{align}
 Moreover, by the boundedness of $G^+$, it follows that 
 $\cE^Y \sim \cE^-$, hence $(\cE^Y,\cF^-)$ is a regular
 Dirichlet form on $L^2(\bR^d)$.
 Hence we can get a transient Hunt process associated with $(\cE^Y,\cF)$. 
 We denote it by $\bfY$.

 \begin{lemma}
  \la{lem-Y}
  The process $\bfY$ is a symmetric
  and transient Hunt process with (SF).
 \end{lemma}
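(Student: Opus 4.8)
The plan is to recognize the transition semigroup of $\bfY$ as an explicit Feynman--Kac semigroup of $\bfM$ already covered by Theorem \ref{thm-Song}, and to read off symmetry and transience essentially for free from the construction of $(\cE^Y,\cF)$ carried out above.

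\emph{Symmetry, Hunt property and transience.} By the discussion preceding the lemma, $(\cE^Y,\cF)$ is a regular symmetric Dirichlet form on $L^2(\bR^d;dx)$: one has $\cE^Y\sim\cE^-$ by the boundedness of $G^+$, and $(\cE^-,\cF)$ is regular because $(\cE,\cF)$ is. Hence the general theory of regular symmetric Dirichlet forms produces the associated $dx$-symmetric Hunt process $\bfY$. The comparability $\cE^Y\sim\cE^-$ forces the extended Dirichlet spaces of the two forms to coincide, and transience is inherited under such a comparison (it amounts to the existence of a strictly positive $g$ with $\int|u|g\,dx\le\cE(u,u)^{1/2}$ on $\cF$, which is stable under comparable forms); since $(\cE^-,\cF)$ is transient, because $\bfM^-$ is transient by Lemma \ref{lem-kproc}, the form $(\cE^Y,\cF)$ and hence $\bfY$ are transient. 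So only (SF) requires work.

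\emph{Strong Feller property.} Set $\tilde\mu:=-\mu^--\xi_{G^+}$. Because $\mu^-\in\bfK$ and $\xi_{G^+}\in\bfK$ (shown above, since $\xi_{G^+}\in\bfK_\infty^-\subset\bfK$), the measure $\tilde\mu$ is a signed smooth measure with $|\tilde\mu|=\mu^-+\xi_{G^+}\in\bfK$, and $|F|\in\bfJ$; thus the pair $(\tilde\mu,F)$ lies in the scope of Theorem \ref{thm-Song}. Consider the Feynman--Kac semigroup
\begin{align}
 p_t^{\tilde\mu,F}f(x)=\bE_x\!\left[\exp\!\big(A_t^{\tilde\mu}+A_t^{F}\big)f(X_t)\right],
\end{align}
where, by additivity of the Revuz correspondence, $A_t^{\tilde\mu}=-A_t^{\mu^-}-A_t^{\xi_{G^+}}$. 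Its quadratic form is $\cE^{\tilde\mu,F}(u,u)=\cE^{\mu,F}(u,u)+\int_{\bR^d}u^2\,d(\mu-\tilde\mu)=\cE^{\mu,F}(u,u)+\int_{\bR^d}u^2\,d\rho^+=\cE^Y(u,u)$, since $\mu-\tilde\mu=\mu^++\xi_{G^+}=\rho^+$; and its domain is again $\cF$ by the Kato-class bounds together with Lemma \ref{lem-Poincare-SV}, exactly as in the derivation of \eqref{expr-2}. Hence $\{p_t^{\tilde\mu,F}\}$ is the $L^2$-semigroup of the regular Dirichlet form $(\cE^Y,\cF)$, so it is a version of the transition semigroup of $\bfY$; since Theorem \ref{thm-Song} gives $p_t^{\tilde\mu,F}(\cB_b)\subset C_b$ for every $t>0$, the process $\bfY$ has (SF). Equivalently one may realize $\bfY$ as the transform of $\bfM^-$ by the multiplicative functional $\exp(A_t^{F^+}-A_t^{\xi_{G^+}})$ and apply Song's theorem, whose proof works for any symmetric Hunt process with (SF), to $\bfM^-$, which has (SF) by Lemma \ref{lem-kproc}.

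\emph{Main obstacle.} The only genuine content is the bookkeeping that identifies the abstractly defined process $\bfY$ with a concrete Feynman--Kac semigroup: one must check that $\tilde\mu$ really is a signed smooth measure with $|\tilde\mu|\in\bfK$, that $A_t^{\tilde\mu}=-A_t^{\mu^-}-A_t^{\xi_{G^+}}$, and that the quadratic form of $\{p_t^{\tilde\mu,F}\}$ has domain exactly $\cF$ and equals $\cE^Y$, so that Theorem \ref{thm-Song} may legitimately be invoked. Given the assumptions \eqref{ass-01} and the form identities already established this is routine, requiring no estimate beyond those already recorded; but it is the step that must be written out carefully.
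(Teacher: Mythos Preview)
Your proof is correct and follows essentially the same route as the paper: symmetry, the Hunt property and transience are read off from the preceding discussion, and (SF) is obtained by realizing the semigroup of $\bfY$ as a Feynman--Kac semigroup $p_t^{\tilde\mu,F}$ with $\tilde\mu=-\mu^--\xi_{G^+}$ and invoking Theorem~\ref{thm-Song}. The paper's one-line version simply notes $|\mu|+\rho^+\in\bfK$ (which dominates $|\tilde\mu|$) and $|F|\in\bfJ$; your more explicit bookkeeping identifying $\cE^{\tilde\mu,F}=\cE^Y$ is exactly the content the paper leaves implicit.
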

 \begin{proof}
  As mentioned above,
  it is clear that $\bfY$ is a transient and symmetric Hunt process.
  Since $|\mu| + \rho^+ \in \bfK$ and $|F| \in \bfJ$,
  we find (SF) by Theorem \ref{thm-Song}.
 \end{proof}

 Let $R^Y(x,y)$ be the Green function of $\bfY$.
 Note that the extended Dirichlet space is $(\cF_e^-,\cE^Y)$. 
 
 In this section we consider the following infimum:
 \begin{equation}
  \lambda := \lambda(\mu,F)
   = \inf \left\{\cE^{Y}(u,u):
	   u \in \cF_e,\
	   \int_{\bR^d} u^2 d\rho^+ = 1\right\}.
  \la{e:GS}
 \end{equation}

 \begin{definition}
  \la{D:GS}
  Let $h$ be the function attaining the infimum of \eqref{e:GS}
  (if it exists). We call $h$ {\it the $\lambda$-ground state
  of $\cH^{\mu,F}$}.
 \end{definition}

 %Following the argument in \cite{Tsu2}, we shall construct the
 %$\lambda$-ground state of $\cH^{\mu,F}$. 
 To study the existence of the $\lambda$-ground state of $\cH^{\mu,F}$,
 it is crucial to check conditions of {\it class (T)} introduced in Takeda \cite{Ta2}. 
 We then see from Lemma \ref{lem-Poincare-SV} and \eqref{GB}
 that for any $u \in \cF_e$, 
 \begin{align}
  \int_{\bR^d} u^2 d\rho^+
  &\le \|R^- \rho^+\|_{\infty} \cE^{-}(u,u) \le \|R^- \rho^+\|_{\infty}
  \cE^{Y}(u,u), 
 \end{align}
 hence we can deduce
 \begin{align}
  \lambda \ge \frac{1}{\|R^{-} \rho^+\|_{\infty}} > 0.
 \end{align}
  If $\lambda = 1$ and
 there exists a function $h$ such that $h$ attains the infimum in \eqref{e:GS},
 this function $h$ satisfies
 \begin{align}
  (-\cH^{\mu,F}h,h)_2 = \cE^{\mu,F}(h,h) = 0.
 \end{align}
 We can regard that the function $h$ is harmonic with respect to
 the Schr\"odinger type operator $\cH^{\mu,F}$ with
 the non-local perturbation.

 To construct the $\lambda$-ground state, we must impose an additional
 assumption.

 \begin{definition}
  \la{def-Ainf}
  Suppose that $F(x,y)$ is a non-negative
  symmetric bounded measurable function
  on $\bR^d \times \bR^d$ vanishing on the diagonal.
  The function $F$ is said to be in the class $\bfA_{\infty}^-$
  if for any $\varepsilon > 0$ there is a Borel subset $K = K(\varepsilon)$
  of finite $\xi_G$-measure and a constant $\delta = \delta(\varepsilon) > 0$
  such that
  \begin{align}
   \sup_{(x,w) \in (E \times E) \setminus d}
   \iint_{(K \times K)^c} R^{-}(x,y)
   \frac{F(y,z) R^{-}(z,w)}{R^{-}(x,w)}N(y,dz) dy \le \varepsilon
  \end{align}
  and for all measurable sets $B \subset K$ with
  $\xi_F(B) < \varepsilon$,
  \begin{align}
   \sup_{(x,w) \in (E \times E) \setminus d}
   \iint_{(B \times E) \cup (E \times B)} R^{-}(x,y)
   \frac{F(y,z) R^{-}(z,w)}{R^{-}(x,w)}N(y,dz) dy \le \varepsilon.
  \end{align}
 \end{definition}

 \begin{lemma}
  \la{lem-tight-02}
  Assume that $F^{+} \in \bfA_{\infty}^-$. 
  If $\nu \in \bfK_{\infty}^-$, then $\nu$ is a Green-tight
  Kato measure associated with $\bfY$, that is,
  \begin{align}
   \lim_{a \to \infty} \sup_{x \in \bR^d} \int_{|y| \ge a}
   R^{Y}(x,y) \nu(dy) = 0.
  \end{align}
 \end{lemma}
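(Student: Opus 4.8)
\emph{Proof strategy.} The plan is to exhibit $\bfY$ as a non‑local \emph{creation} transform of a killed version of $\bfM^-$, and then to run a two–step argument: (i) a uniform bound $\sup_x R^Y\xi_{G^+}(x)<\infty$ coming from the fact that the relevant ``killed gauge'' is bounded \emph{away from $1$}; (ii) an elementary transfer of the tail estimate through the resolvent identity. I record first how $\bfY$ sits over $\bfM^-$: by \eqref{expr-2} (adding back $\int u^2\,d\rho^+$) and expanding the jump form, for $u,v\in\cF$,
\begin{align}
 \cE^Y(u,v)=\cE^-(u,v)+\int_{\bR^d}uv\,d\xi_{G^+}-\iint_{\bR^d\times\bR^d}u(x)v(y)\,G^+(x,y)\,N(x,dy)\,dx .
\end{align}
Let $\widehat\bfM$ be the process obtained from $\bfM^-$ by killing with the smooth Kato measure $\xi_{G^+}$, and $\widehat R$ its Green function. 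Then $\widehat R\le R^-$, $\widehat\bfM$ is a transient Hunt process with (SF) (as a Feynman–Kac transform of $\bfM^-$ by the Kato measure $\xi_{G^+}$, cf.\ Lemma \ref{lem-kproc}), and $\bfY$ is the creation transform of $\widehat\bfM$ with jump kernel $G^+(y,z)N(y,dz)$. Writing $(\Gamma g)(y):=\int_{\bR^d}G^+(y,z)g(z)\,N(y,dz)$, the Green functions are linked by the resolvent identity
\begin{align}
 R^Y(x,w)=\widehat R(x,w)+\iint_{\bR^d\times\bR^d}R^Y(x,y)\,G^+(y,z)\,\widehat R(z,w)\,N(y,dz)\,dy
 \la{e:resY}
\end{align}
and by its Neumann iteration $R^Y=\sum_{n\ge0}(\widehat R\,\Gamma)^n\widehat R$, all terms nonnegative since $G^+\ge0$ (here one uses, via Lemma \ref{lem-Y}, that $\bfY$ is a transient Hunt process with (SF), so all these kernels are genuine functions, $R^Y<\infty$, and the Neumann series indeed sums to $R^Y$).

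The crucial observation is that $q:=\sup_{x\in\bR^d}\widehat R\,\xi_{G^+}(x)<1$. Indeed, since $A_t^{\xi_{G^+}}=\int_0^t NG^+(X_s)\,ds$ is the PCAF of $\bfM^-$ with Revuz measure $\xi_{G^+}$, the standard Feynman–Kac identity gives $\widehat R\,\xi_{G^+}(x)=\bE_x^-\!\big[\int_0^\infty e^{-A_t^{\xi_{G^+}}}\,dA_t^{\xi_{G^+}}\big]=\bE_x^-\!\big[1-e^{-A_\infty^{\xi_{G^+}}}\big]$. Because $G^+\asymp F^+$ and $F^+\in\bfJ_\infty^-$, one has $\xi_{G^+}\in\bfK_\infty^-$, so by Lemma \ref{lem-Green-bdd} $M:=\|R^-\xi_{G^+}\|_\infty=\sup_x\bE_x^-[A_\infty^{\xi_{G^+}}]<\infty$. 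Then $\bP_x^-(A_\infty^{\xi_{G^+}}>2M)\le\tfrac12$ by Chebyshev, whence $\bE_x^-[1-e^{-A_\infty^{\xi_{G^+}}}]\le 1-\tfrac12 e^{-2M}$ for every $x$, i.e.\ $q\le 1-\tfrac12 e^{-2M}<1$. Feeding $\widehat R\,\xi_{G^+}\le q$ into the Neumann series and using $\int_{\bR^d}G^+(y,z)N(y,dz)=NG^+(y)$, $\xi_{G^+}(dy)=NG^+(y)\,dy$, one gets $(\widehat R\,\Gamma)^n\widehat R\,\xi_{G^+}\le q^{n+1}$ step by step, hence
\begin{align}
 \sup_{x\in\bR^d}R^Y\xi_{G^+}(x)\le\sum_{n\ge0}q^{n+1}=\frac{q}{1-q}<\infty .
 \la{e:boundRY}
\end{align}

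Finally, fix $\nu\in\bfK_\infty^-$ and put $g_a(z):=\int_{|w|\ge a}\widehat R(z,w)\,\nu(dw)$; since $\widehat R\le R^-$ and $\nu\in\bfK_\infty^-$, $\|g_a\|_\infty\le\sup_z\int_{|w|\ge a}R^-(z,w)\,\nu(dw)\to0$ as $a\to\infty$. Integrating \eqref{e:resY} against $\nu(dw)$ over $\{|w|\ge a\}$ (Fubini, all terms nonnegative),
\begin{align}
 \int_{|w|\ge a}R^Y(x,w)\,\nu(dw)
 &=g_a(x)+\iint_{\bR^d\times\bR^d}R^Y(x,y)\,G^+(y,z)\,g_a(z)\,N(y,dz)\,dy\\
 &\le\|g_a\|_\infty\Big(1+\int_{\bR^d}R^Y(x,y)\,\xi_{G^+}(dy)\Big)=\|g_a\|_\infty\big(1+R^Y\xi_{G^+}(x)\big).
\end{align}
Combining with \eqref{e:boundRY} yields $\sup_{x\in\bR^d}\int_{|w|\ge a}R^Y(x,w)\,\nu(dw)\le\tfrac1{1-q}\,\|g_a\|_\infty\to0$ as $a\to\infty$, which is the asserted Green–tightness; that $\nu$ lies in the Kato class associated with $\bfY$ is immediate since $\bfK_\infty^-\subset\bfK$.

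\emph{Where the difficulty lies.} There is no serious obstruction in this route: the whole argument hinges on the soft bound $q<1$ (a consequence of $\xi_{G^+}\in\bfK_\infty^-$ through Lemma \ref{lem-Green-bdd}) plus the resolvent identity \eqref{e:resY}. The only technical care is in justifying \eqref{e:resY} and the Neumann identification of $R^Y$ as an identity of \emph{kernels} (not merely of operators on $L^2$) — this rests on the (SF) and transience of $\widehat\bfM$ and $\bfY$ (Lemmas \ref{lem-kproc}, \ref{lem-Y}). In this approach the standing hypothesis $F^+\in\bfA_\infty^-$ of Definition \ref{def-Ainf} is used only implicitly (to know $\xi_{G^+}\asymp\xi_{F^+}\in\bfK_\infty^-$); its full strength — the $3$G–type control of the non‑local creation kernel — is what will be needed later, for the compactness/class‑(T) argument, rather than for this lemma. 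If one prefers not to pass through $\widehat\bfM$, one may instead use $F^+\in\bfA_\infty^-$ directly to obtain the conditional–gauge bound $R^Y\le C\,R^-$ à la \cite{Ch} and conclude the tail estimate in one line; the argument above shows this stronger comparison is not actually required for the stated conclusion.
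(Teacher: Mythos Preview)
Your argument is correct and genuinely different from the paper's. The paper proceeds by first observing that $\cE^Y(u,u)\ge\lambda\int u^2\,d\xi_{G^+}$ (from the very definition of $\lambda$), which via \cite[Theorem~1.1]{KK0} yields gaugeability of the transform $\bfM^-\rightsquigarrow\bfY$; then, invoking the hypothesis $F^+\in\bfA_\infty^-$ and \cite[Lemma~3.9(2), Theorem~3.10]{Ch}, the paper upgrades this to \emph{conditional} gaugeability, which gives the pointwise comparison $R^Y(x,y)\le K\,R^-(x,y)$ and hence the tightness in one line. Your route bypasses all of this: you never compare $R^Y$ and $R^-$ pointwise, and you never use $\bfA_\infty^-$. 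Instead you exploit the purely local--analytic fact that $\widehat R\,\xi_{G^+}(x)=1-\bE_x^-[e^{-A_\infty^{\xi_{G^+}}}]\le 1-e^{-M}<1$ (Jensen gives an even cleaner bound than your Chebyshev step), and then a one--step bootstrap through the resolvent identity. What the paper's approach buys is the stronger kernel inequality $R^Y\le K R^-$, which is independently useful (and indeed the gaugeability established en route is quoted in Lemma~\ref{lem-irr}); what your approach buys is a more elementary, self--contained proof that actually shows the lemma holds under the weaker standing assumption $F^+\in\bfJ_\infty^-$ alone. The one place to be a bit more careful is in turning $R^Y\xi_{G^+}\le q+q\,R^Y\xi_{G^+}$ into $R^Y\xi_{G^+}\le q/(1-q)$: this rearrangement needs $R^Y\xi_{G^+}(x)<\infty$ a~priori. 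You flag this yourself; the cleanest fix is to run the identical argument for the $\alpha$--resolvents $R_\alpha^Y$, $\widehat R_\alpha$ (where finiteness is automatic since $\xi_{G^+}\in\bfK$) to get $R_\alpha^Y\xi_{G^+}\le q/(1-q)$ uniformly in $\alpha>0$, and then let $\alpha\downarrow0$ by monotone convergence.
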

 \begin{proof}
  Suppose that $\eta$ is in $\bfK_{\infty}^-$.
  Let $\lambda$ be a constant in \eqref{e:GS}.
  Then we can deduce that for any $u \in \cF_e^-$,
  \begin{align}
   \cE^{Y}(u,u) \ge \lambda \int_{\bR^d} u^2 d\rho^+
   \ge \lambda \int_{E} u^2 d\xi_{G^+}.
  \end{align}
  This implies
  \begin{align}
   \inf\left\{\cE^Y(u,u) : u \in \cF_e^-,\
   \lambda \int_{\bR^d} u^2 d\xi_{G^+} = 1\right\}
   \ge 1 > 0,
  \end{align}
  hence by \cite[Theorem 1.1]{KK0},
  \begin{align}
   \sup_{x \in \bR^d} \bE_x^-
   \left[\exp\left(-A_{\zeta}^{\xi_{G^+}}
   + A_{\zeta}^{F^+}\right)\right] < \infty,
   \la{eq-gauge}
  \end{align}
  where $\zeta$ denotes the life time of $\bfM^-$.
  This means the gaugeability of $-\xi_{G^+} + F^+$
  with respect to $\bfM^-$. 
  By \cite[Lemma 3.9 (2), Theorem 3.10]{Ch},
  since the gaugeability is equivalent to the conditional gaugeability
  in this setting, 
  there exists a constant $K > 0$ such that
  \begin{align}
   R^Y(x,y) \le KR^{-}(x,y)
  \end{align}
  for any $x,y \in \bR^d$.
  The proof is completed.
 \end{proof}

 \begin{lemma}
  \la{lem-irr}
  The process $\bfY$ is irreducible. 
 \end{lemma}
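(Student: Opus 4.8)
The plan is to argue via the standard correspondence (Fukushima--Oshima--Takeda) between irreducibility of the Hunt process $\bfY$ and the absence of nontrivial invariant sets for the Dirichlet form $(\cE^Y,\cF)$: $\bfY$ is irreducible precisely when every Borel set $A\subset\bR^d$ that is invariant — i.e. $\mathbf 1_A u\in\cF$ for all $u\in\cF$ and $\cE^Y(\mathbf 1_A u,\mathbf 1_{A^c}v)=0$ for all $u,v\in\cF$, with $A^c=\bR^d\setminus A$ — satisfies $|A|=0$ or $|A^c|=0$, where $|\cdot|$ denotes Lebesgue measure. The feature I would exploit is that $\bfY$ is a \emph{pure-jump} process whose jumping kernel, by the boundedness of $F^{\pm}$ and the positivity of $\psi$, is strictly positive everywhere off the diagonal; this makes the verification immediate.

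First I would record, from \eqref{expr-2} and \eqref{Y-form}, the Beurling--Deny decomposition of $(\cE^Y,\cF)$: there is no diffusion part, the killing measure is $\rho^-$, and the jumping measure is $J^Y(dx\,dy)=\tfrac12\bigl(e^{-F^-(x,y)}+G^+(x,y)\bigr)N(x,dy)\,dx$, which is symmetric since $e^{-F^-}$, $G^+$, and $N(x,dy)\,dx$ are. As $F^-$ is bounded, $e^{-F^-(x,y)}\ge e^{-\|F^-\|_\infty}>0$; moreover $G^+\ge0$ and $\psi(r)=I(r)/I(0)>0$ for every $r\ge0$. Hence $J^Y(dx\,dy)=j^Y(x,y)\,dx\,dy$ with $j^Y$ a strictly positive density — in fact bounded above and below by positive constant multiples of the $\alpha$-stable jumping density $C(d,-\alpha)\,\psi(m^{1/\alpha}|x-y|)\,|x-y|^{-d-\alpha}$ — so that $J^Y$ and $dx\,dy$ are mutually absolutely continuous on $(\bR^d\times\bR^d)\setminus\mathsf{diag}$.

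Then, given an invariant set $A$, I would take $u\in C_c^\infty(\bR^d)\subset\cF$, so that $\mathbf 1_A u,\mathbf 1_{A^c}u\in\cF$ by invariance, and expand $0=\cE^Y(\mathbf 1_A u,\mathbf 1_{A^c}u)$. The killing contribution $\int_{\bR^d}(\mathbf 1_A u)(\mathbf 1_{A^c}u)\,d\rho^-$ vanishes, and splitting the (absolutely convergent) jump bilinear form over the partition $A\times A$, $A\times A^c$, $A^c\times A$, $A^c\times A^c$ and using the symmetry of $J^Y$ leaves exactly $\iint_{A\times A^c}u(x)u(y)\,J^Y(dx\,dy)=0$. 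If now $|A|>0$ and $|A^c|>0$, the Lebesgue density theorem gives an open ball $B$ with $|B\cap A|>0$ and $|B\cap A^c|>0$; choosing $u\in C_c^\infty$ with $u>0$ on $\overline B$ makes $\iint_{A\times A^c}u(x)u(y)\,J^Y(dx\,dy)\ge(\min_{\overline B}u)^2\,\iint_{(B\cap A)\times(B\cap A^c)}J^Y(dx\,dy)>0$, since $j^Y>0$ and both sets have positive measure — a contradiction. Thus $|A|=0$ or $|A^c|=0$, i.e. $\bfY$ is irreducible.

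I do not expect a genuine obstacle; the only points deserving a line of care are that $C_c^\infty(\bR^d)\subset\cF\ (=\cD(\cE))$ and that the jump bilinear form of $\cE^Y$ may be split over the four regions above — legitimate because $\mathbf 1_A u,\mathbf 1_{A^c}u\in\cF$, so the integral converges absolutely by Cauchy--Schwarz applied to $\cE^Y(\mathbf 1_A u,\mathbf 1_A u)$ and $\cE^Y(\mathbf 1_{A^c}u,\mathbf 1_{A^c}u)$. Alternatively, a probabilistic proof would follow from strict positivity of the transition density $p_t^{\bfY}(x,y)$ for all $t>0$ and $x,y\in\bR^d$, inherited from the corresponding positivity for the (relativistic) $\alpha$-stable process since the Feynman--Kac transforms linking $\bfM$, $\bfM^-$ and $\bfY$ only multiply the integrand by almost surely strictly positive weights; but the Dirichlet-form route seems cleanest.
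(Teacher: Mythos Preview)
Your argument is correct, but it differs from the paper's. You work analytically: you read off the Beurling--Deny decomposition of $(\cE^Y,\cF)$, observe that the jumping density $j^Y(x,y)=\tfrac12\bigl(e^{-F^-}+G^+\bigr)N(x,dy)/dy$ (which in fact simplifies to $\tfrac12 e^{F}N(x,dy)/dy$) is strictly positive off the diagonal because $F^-$ is bounded and $\psi>0$, and then use the standard invariant-set characterization to force $|A|=0$ or $|A^c|=0$. The paper instead argues probabilistically in two lines: the semigroup of $\bfY$ is $p_t^{\bfY}f(x)=\bE_x[M_t f(X_t)]$ with $M_t=\exp(-A_t^- - A_t^{\xi_{G^+}}+A_t^{F^+})>0$ for $t<\zeta$, so any $p_t^{\bfY}$-invariant set is automatically $p_t$-invariant for $\bfM$, and the irreducibility of $\bfM$ finishes the proof. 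Amusingly, this is exactly the ``alternative'' you sketch in your last sentence and dismiss as less clean.

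What each route buys: the paper's argument is shorter and uses only the structure already set up (the explicit Feynman--Kac representation of $\bfY$), so it transfers verbatim to any perturbation by a strictly positive multiplicative functional. Your Dirichlet-form argument is self-contained and would work for any pure-jump form with everywhere-positive jumping kernel, even without an underlying irreducible reference process; it also makes the mechanism (long-range jumps connect any two sets of positive measure) completely transparent. Two minor cosmetic points: the sign in the cross term is $\cE^Y(1_Au,1_{A^c}u)=-\iint_{A\times A^c}u(x)u(y)\,J^Y(dx\,dy)$, not $+$, though this does not affect the conclusion; and the existence of a ball meeting both $A$ and $A^c$ in positive measure follows simply by taking any large enough ball (invoking the Lebesgue density theorem is not quite the right justification).
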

 \begin{proof}
  By the gaugeability in \eqref{eq-gauge}, we see 
  \begin{align}
   M_t = \exp\left(-A_t^- - A_{t}^{\xi_{G^+}} + A_t^{F^+}\right) > 0
   \quad \text{for}\ t < \zeta. 
  \end{align}
  Since the semigroup of $\bfY$ is represented by
  \begin{align}
   p_t^{\bfY}f(x) = \bE_x\left[M_t f(X_t)\right], 
  \end{align}
  The irreducibility of $\bfM$ implies the irreducibility of $\bfY$. 
  
 \end{proof}

 \begin{theorem}
  \la{thm-main-01}
  Suppose that $\mu$ and $F$ satisfy \eqref{ass-01}.
  In addition, assume that $F^+ \in \bfA_{\infty}^-$.
  Then there exists a positive continuous
  bounded $\lambda$-ground state $h$ in \eqref{e:GS}.
 \end{theorem}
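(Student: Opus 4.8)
The plan is to obtain $h$ by the direct method of the calculus of variations applied to the quotient $\cE^Y(u,u)\big/\int_{\bR^d}u^2\,d\rho^+$ on the extended Dirichlet space $\cF_e$, and then to upgrade the resulting minimizer to a function that is positive, bounded and continuous. The engine of the whole argument is a \emph{compact embedding} $(\cF_e,\cE^Y)\hookrightarrow L^2(\bR^d,\rho^+)$, which in turn comes from the fact that our data falls into Takeda's \emph{class (T)} (cf.\ \cite{Ta2}) relative to the transformed process $\bfY$ and the measure $\rho^+$.

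First I would record that the hypotheses of class (T) are satisfied here. By Lemma~\ref{lem-Y} the process $\bfY$ is symmetric, transient and has (SF); by Lemma~\ref{lem-irr} it is irreducible; and by Lemma~\ref{lem-tight-02} — which is precisely where the extra assumption $F^+\in\bfA_\infty^-$ is used — the measure $\rho^+\in\bfK_\infty^-$ is a Green-tight Kato measure for $\bfY$. Since $(\cE^Y,\cF)$ is a transient regular Dirichlet form, $(\cF_e,\cE^Y)$ is a Hilbert space, and the theory of class (T) then yields that the inclusion $\iota\colon(\cF_e,\cE^Y)\to L^2(\bR^d,\rho^+)$ is well defined, bounded and \emph{compact}; its boundedness is already visible from the Poincar\'e-type inequality $\int_{\bR^d}u^2\,d\rho^+\le\|R^-\rho^+\|_\infty\,\cE^Y(u,u)$ established above.

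Next I would run the direct method. Choose a minimizing sequence $\{u_n\}\subset\cF_e$ with $\int u_n^2\,d\rho^+=1$ and $\cE^Y(u_n,u_n)\to\lambda$; it is bounded in the Hilbert space $(\cF_e,\cE^Y)$, so along a subsequence $u_n\rightharpoonup h$ weakly in $(\cF_e,\cE^Y)$, and by the compact embedding $u_n\to h$ in $L^2(\rho^+)$, whence $\int h^2\,d\rho^+=1$. Weak lower semicontinuity of $\cE^Y$ gives $\cE^Y(h,h)\le\liminf_n\cE^Y(u_n,u_n)=\lambda$, so $\cE^Y(h,h)=\lambda$ and $h$ attains \eqref{e:GS}. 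Replacing $h$ by $|h|$, which lies in $\cF_e$ with $\cE^Y(|h|,|h|)\le\cE^Y(h,h)$ because $(\cE^Y,\cF)$ is a Dirichlet form, we may take $h\ge0$. The minimizer satisfies the Euler--Lagrange identity $\cE^Y(h,v)=\lambda\int_{\bR^d}hv\,d\rho^+$ for all $v\in\cF_e$, i.e.\ $h=\lambda R^Y(h\rho^+)$ with $R^Y$ the $0$-order Green operator of $\bfY$; since $\bfY$ is irreducible and $h\not\equiv0$, the set $\{h>0\}$ is $\bfY$-invariant and of full measure, so $h=\lambda R^Y(h\rho^+)>0$ everywhere. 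It then remains to prove boundedness and continuity: for boundedness I would bootstrap the identity $h=\lambda R^Y(h\rho^+)$ as in \cite{Ta2,Tsu2}, using $\|R^Y\rho^+\|_\infty<\infty$ (from \eqref{GB} and the comparison $R^Y\le KR^-$ of Lemma~\ref{lem-tight-02}) together with the Green-tightness of $\rho^+$ for $\bfY$ — splitting the integral over $\{h\le N\}$ and $\{h>N\}$ improves the integrability of $h$ step by step, giving first $h\in L^\infty(\rho^+)$ and then $h=\lambda R^Y(h\rho^+)\le\lambda\|h\|_{L^\infty(\rho^+)}\|R^Y\rho^+\|_\infty<\infty$, i.e.\ $h\in L^\infty(\bR^d)$; continuity of $h$ then follows from the continuity of $0$-order potentials of Green-tight Kato measures with bounded density, equivalently from the resolvent strong Feller property in class (T), which holds by Theorem~\ref{thm-Song}. (Uniqueness up to a constant multiple can be appended afterwards by a Perron--Frobenius argument based on the irreducibility of $\bfY$.)

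The main obstacle is the compact embedding, and behind it the verification that $\rho^+$ is Green-tight for the \emph{transformed} process $\bfY$ rather than only for $\bfM^-$: this is exactly what forces the hypothesis $F^+\in\bfA_\infty^-$ and is carried out through the conditional-gaugeability bound $R^Y\le KR^-$ of Lemma~\ref{lem-tight-02}. A secondary, non-local-specific difficulty is that the additive functional $A^F$ entering the definition of $\bfY$ is purely discontinuous, so the boundedness and continuity of the eigenfunction must be deduced from the Green-operator representation $h=\lambda R^Y(h\rho^+)$ rather than from any path-regularity of the functionals.
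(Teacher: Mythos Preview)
Your proposal is correct and follows essentially the same route as the paper: both verify, via Lemmas~\ref{lem-Y}, \ref{lem-tight-02} (this is where $F^+\in\bfA_\infty^-$ enters) and \ref{lem-irr}, that $\bfY$ together with $\rho^+$ falls into Takeda's class~(T), and then invoke his machinery to produce a strictly positive, bounded, continuous minimizer. The only difference is presentational: the paper cites \cite[Theorem~4.8]{Ta} as a black box, whereas you unpack its content (compact embedding $(\cF_e,\cE^Y)\hookrightarrow L^2(\rho^+)$, direct method, the representation $h=\lambda R^Y(h\rho^+)$, bootstrap for boundedness, resolvent strong Feller for continuity).
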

  \begin{proof}
   By Lemma \ref{lem-tight-02},
   note that the measure $\rho^+$ belongs to $\bfK_{\infty}(\bfY)$
   since $\rho^+$ is in $\bfK_{\infty}^-$.
   Combining Lemma \ref{lem-irr} and \cite[Theorem 4.8]{Ta}, 
   we see from  the existence of
   $\lambda$-ground state $h$. Moreover, we can take the function
   $h$ which is strictly positive, bounded and continuous on $\bR^d$. 
  \end{proof}

  \section{Harmonicity of ground states}
  In this section, we prove that the ground state $h$ obtained from
  Theorem \ref{thm-main-01} satisfies 
  $\cH^{\mu + (1-\lambda) \rho^+, F} h = 0$,  
  in particular, $h$ is a $\cH^{\mu,F}$-harmonic function if $\lambda = 1$.
  We impose the following assumption to prove that
  the ground state becomes the harmonic function with respect to
  $\cH^{\mu + (1 - \lambda)\rho^+ + F}$:
  \begin{itemize}
   \item[{\bf (A)}] for any $z \in \bR^d$, there exists a constant $r > 0$
		such that $\|R^{Y}(1_{B(z,r)} \mu)\|_{\infty} < 1/\lambda$
  \end{itemize}

  We now define the harmonic function probabilistically.
  For a open set $D \subset \bR^d$, $\tau_D$ denotes the
  exit time of $D$, $\tau_D := \inf\{t > 0 : X_t \not\in D\}$.
  We consider a connected non-empty open set $D$ of $\bR^d$.
  The boundary point $z \in \partial D$ is
  said to be {\it regular} if $\bP_z (\tau_D = 0) = 1$.
  Denote by $(\partial D)_r$ the set of regular points in boundary.
  $D$ is said to be {\it regular} if $(\partial D)_r = \partial D$.
  The part process $\bfM^{D}$ of $\bfM$
  on $D$ is defined as the process
  killed upon leaving $D$. Note that $\bfM^{D}$ is transient.
  Let $R^D(x,y)$ be a Green function of $\bfM^D$ and
  $(\cE^{}, \cF^D)$ the part Dirichlet
  form of $(\cE^{}, \cF^D)$ on
  $D$ (see \cite[Section 4.4]{FOT}).

  \begin{definition}
   \la{def-4-3}
   Let $D$ be an open set in $\bR^d\ (d = 1,2)$.
   A bounded continuous function $h$ on $D$
    is said to be {\it $\cH^{\mu,F}$-harmonic on $D$},
   if for any relatively compact open set $U$ of $D$,
   \begin{equation}
    h(x) = \bE_x [ \exp (A_{\tau_U}) h(X_{\tau_U})],
     \quad \textit{for any}\ x \in U.
   \end{equation}
  \end{definition}

  For any domain $D$, set
  \begin{align}
   u_{D,A}(x) = \bE_x^-[\exp\left(A_{\tau_D}^+\right)].
  \end{align}
  If $\sup_{x \in \bR^d} u_{D,A}(x)$ is finite, then
  we say that the pair $(D,A)$ is gaugeable with respect to
  $\bfM^-$.

  \begin{lemma}
   \la{lem-gauge-01}
   Let $D$ be a bounded regular domain with 
   $\rho^+(D) > 0$. Then $(D,A^+ + (\lambda - 1)A^{\rho^+})$ is
   gaugeable with respect to $\bfM^-$.
  \end{lemma}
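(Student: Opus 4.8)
The plan is to transfer the problem from $\bfM^-$ to the auxiliary process $\bfY$, where the relevant perturbation collapses to a pure creation by the single measure $\lambda\rho^+$, and then to extract the gauge bound over the bounded set $D$ from the fact that the $\lambda$-ground state is attained, using the standard strict domain monotonicity of the generalized principal eigenvalue. The first step is purely algebraic: since $A^+ = A^{\mu^+} + A^{F^+}$ and $A^{\rho^+} = A^{\mu^+} + A^{\xi_{G^+}}$, at time $\tau_D$ one has
\begin{align}
A^+_{\tau_D} + (\lambda-1)A^{\rho^+}_{\tau_D} = \bigl(A^{F^+}_{\tau_D} - A^{\xi_{G^+}}_{\tau_D}\bigr) + \lambda A^{\rho^+}_{\tau_D}.
\end{align}
Recalling from the proof of Lemma \ref{lem-irr} that $\bfY$ is obtained from $\bfM^-$ by the positive multiplicative Feynman--Kac functional $\exp(-A^{\xi_{G^+}}_t + A^{F^+}_t)$, and that $A^{\rho^+}$ is a PCAF of the common sample path, applying this transform up to the $\cF_t$-stopping time $\tau_D$ gives
\begin{align}
u_{D,\,A^+ + (\lambda-1)A^{\rho^+}}(x) = \bE_x^-\bigl[\exp\bigl(A^{F^+}_{\tau_D} - A^{\xi_{G^+}}_{\tau_D} + \lambda A^{\rho^+}_{\tau_D}\bigr)\bigr] = \bE_x^{\bfY}\bigl[\exp\bigl(\lambda A^{\rho^+}_{\tau_D}\bigr)\bigr],
\end{align}
where $\bE_x^{\bfY}$ is the expectation for $\bfY$ started at $x$. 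Thus it suffices to bound the gauge of the pair $(D,\lambda\rho^+)$ with respect to $\bfY$.

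Next I would prove that the bottom eigenvalue strictly increases when one passes to the part form on $D$. Write $\cF_e$ for the extended Dirichlet space of $(\cE^Y,\cF)$ and $\cF_e^D := \{u \in \cF_e : \tilde u = 0 \text{ q.e.\ on } \bR^d \setminus D\}$ for the extended Dirichlet space of the part form of $(\cE^Y,\cF)$ on $D$, and set
\begin{align}
\lambda_D := \inf\Bigl\{\cE^Y(u,u) : u \in \cF_e^D,\ \int_{\bR^d} u^2\, d\rho^+ = 1\Bigr\}.
\end{align}
The constraint set is non-empty precisely because $\rho^+(D) > 0$, and $\lambda_D \ge \lambda$ since $\cF_e^D \subset \cF_e$. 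If $\lambda_D = \lambda$, then restricting the compact embedding $\cF_e \hookrightarrow L^2(\bR^d,\rho^+)$ (which underlies Lemma \ref{lem-tight-02} and Theorem \ref{thm-main-01}) to the closed subspace $\cF_e^D$, the infimum $\lambda_D$ is attained by some $h_D \ge 0$ with $\int h_D^2\, d\rho^+ = 1$; this $h_D$ then attains \eqref{e:GS} as well, so by the uniqueness of the ground state (Theorem \ref{thm-main-01}) it is a positive multiple of $h$, which contradicts $h_D = 0$ q.e.\ on $\bR^d \setminus D$ — a set of positive capacity, as $D$ is bounded. Hence $\lambda_D > \lambda$, and therefore, for every $u \in \cF_e^D$,
\begin{align}
\cE^Y(u,u) - \lambda \int_{\bR^d} u^2\, d\rho^+ \ \ge\ \Bigl(1 - \frac{\lambda}{\lambda_D}\Bigr)\cE^Y(u,u);
\end{align}
that is, the Schr\"odinger form of $\bfY$ on $D$ with creation measure $\lambda\rho^+$ is subcritical.

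Since $\overline D$ is compact and $\rho^+ \in \bfK$, $\lambda\rho^+$ is a Green-tight Kato measure for the part process of $\bfY$ on $D$, so the subcriticality--gaugeability equivalence (\cite[Theorem 1.1]{KK0}; cf.\ \cite{Ch}), applied just as in the proof of Lemma \ref{lem-tight-02}, yields $\sup_x \bE_x^{\bfY}[\exp(\lambda A^{\rho^+}_{\tau_D})] < \infty$, which by the reduction above equals $\sup_x u_{D,\,A^+ + (\lambda-1)A^{\rho^+}}(x) < \infty$. An alternative to this last step: since $h$ is the $\lambda$-ground state, $\exp(\lambda A^{\rho^+}_t)h(X_t)$ is a non-negative $\bfY$-martingale; optional stopping at $\tau_D \wedge n$ together with Fatou's lemma, a lower bound for $h$ on $\overline D$ and on a large ball containing $D$, and the integrability of the exit distribution, give the same bound, with the gauge dominated by $c^{-1}\|h\|_\infty$.

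The step that needs the most care is the first one: because $A^{F^+}$ is purely discontinuous and $\exp(-A^{\xi_{G^+}} + A^{F^+})$ is of creation type rather than a supermartingale, one must invoke the global gaugeability \eqref{eq-gauge} both to guarantee that $\bfY$ is a genuine transient Hunt process and to justify that the transform identity propagates from deterministic times to the stopping time $\tau_D$ with correct accounting of the (possibly finite) lifetime — exactly the discontinuity issue flagged in the introduction. The strict inequality $\lambda_D > \lambda$, although routine, likewise rests on the uniqueness and strict positivity of $h$ from Theorem \ref{thm-main-01}.
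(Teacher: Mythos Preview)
Your proof is correct and follows essentially the same route as the paper: both reduce to showing that $(D,\lambda\rho^+)$ is gaugeable with respect to $\bfY$, prove strict inequality of the principal eigenvalue on $D$ versus $\bR^d$ by a contradiction exploiting uniqueness and strict positivity of the $\lambda$-ground state, and then invoke the subcriticality--gaugeability equivalence from \cite{Ch,KK0}. The only cosmetic differences are that the paper performs the algebraic conversion between $\bfM^-$ and $\bfY$ at the end rather than the beginning, and first phrases the eigenvalue gap with the $L^2(D,dx)$ normalization (using the compact embedding of $\cF^D$ into $L^2(D,dx)$ from \cite{Ta2}) before passing via \cite[Remark~2.3]{TT} to the $\rho^+$-normalized statement you write down directly.
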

  \begin{proof}
   Since $\rho^+ \in \bfK_{\infty}^-$,
   we know that for any $\varepsilon > 0$ there exists a
   Borel set $K = K(\varepsilon)$ such that for all Borel
   subset $K = K(\varepsilon)$ of $D$ with $\rho^+(K) < \infty$
   and a constant $\delta > 0$ such that for all $\rho^+$-measurable
   set $B \subset K$ with $\rho^+(B) < \delta$,
   \begin{align}
    \sup_{x \in D} R^D(1_{B \cup K^c} \rho^+)(x)
    = \sup_{x \in D} \int_{B \cup K^c} R^D(x,y) \rho^+(dy)
    < \varepsilon.
   \end{align}
   
   Now we claim that
   \begin{align}
    \inf\left\{\cE^{Y}(u,u) - \lambda \int_D u^2 d\rho^+ :
    u \in \cF^D,\ \int_{D} u^2 dx = 1\right\} > 0.
    \la{eq-1-eigen}
   \end{align}
   If the left-hand side equals zero,
   there exists a function $u_0 \in \cF^D$ such that
   \begin{align}
    \cE^Y(u_0,u_0) = \lambda \int_{D}u_0^2 d\rho^+
   \end{align}
   since $(\cF^D,\cE_1^{})$ is
   compactly embedded in $L^2(D,dx)$ by \cite[Theorem 4.1]{Ta2}.
   Noting $\int_{D} u_0^2 d\rho^+ > 0$ because $u_0(x) > 0$ q.e. on $D$ and
   $\rho^+(D) > 0$, the function $u_0/\sqrt{\int_{D} u_0^2 d\mu^-}$
   attains the infimum \eqref{e:GS}. Hence
   $u_0(x) > 0$ for q.e. $x \in \bR^d$, this is contradictory that
   $u_0 \in \cF^D$.
   
   In view of \cite[Remark 2.3]{TT},
   the inequality \eqref{eq-1-eigen} implies
   \begin{align}
    \inf\left\{\cE^Y(u,u) : u \in \cF^D,\ \lambda \int_{D} u^2 d\rho^+ = 1
    \right\} > 1.
    \la{eq-2-eigen}
   \end{align}
   Hence we prove that $(D, \lambda\rho^+)$ is
   gaugeable with respect to $\bfY$ by \eqref{eq-2-eigen}
   and \cite[Theorem 5.1]{Ch}, that is,
   \begin{align}
    & \sup_{x \in \bR^d} \bE_x^Y\left[\exp(\lambda A_{\tau_D}^{\rho^+})\right]
    < \infty \\
    &\iff \sup_{x \in \bR^d} \bE_x
    \left[\exp\left(-A_{\tau_D}^- + \sum_{s \le \tau_D} F^+(X_{s-},X_s)
    - A_{\tau_D}^{\xi_{G^+}} + \lambda A_{\tau_D}^{\rho^+}\right)\right] < \infty \\
    &\iff \sup_{x \in \bR^d} \bE_x^{-} \left[\exp\left(\sum_{s \le \tau_D} F^{+}(X_{s-},X_s)
    + \lambda A_{\tau_D}^{\rho^+} - A_t^{\xi_{G^+}} \right)\right] < \infty \\
    &\iff \sup_{x \in \bR^d} \bE_x^{-}\left[\exp\left(A_{\tau_D}^+ +
    (\lambda - 1) A_{\tau_D}^{\rho^+}\right)\right] < \infty
   \end{align}
   Therefore the proof is completed. 
  \end{proof}

  Note that if $\lambda = 1$, Lemma \ref{lem-gauge-01} implies 
  \begin{align}
   \sup_{x \in D} \bE_x\left[\exp\left(A_{\tau_D}\right)\right] < \infty.
  \end{align}

  In the sequel, we put $A^{\eta}_t = A_t^{+} + (\lambda - 1)A_t^{\rho^+}$.
  
  \begin{lemma}
   \la{lem-gaugeability}
   Suppose that $D$ is a bounded regular domain such that
   $\rho^+(D) > 0$. 
   For any bounded domain $B$ of $D$, $(B, A^{\eta})$ is gaugeable with respect
   to $\bfM^{-}$. 
  \end{lemma}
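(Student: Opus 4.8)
The plan is to run the argument of Lemma~\ref{lem-gauge-01} verbatim with $D$ replaced by the given subdomain $B$; the only genuinely new observation is that the spectral estimate \eqref{eq-1-eigen} descends for free to every subdomain of $D$. First I would note that, since $B$ is open and $B\subset D$, zero extension embeds $\cF^B$ into $\cF^D$, and for $u\in\cF^B$ extended by $0$ off $B$ one has $\int_B u^2\,dx=\int_D u^2\,dx$, $\int_B u^2\,d\rho^+=\int_D u^2\,d\rho^+$, while $\cE^Y(u,u)$ is unchanged. Hence
\begin{align}
 &\inf\left\{\cE^Y(u,u)-\lambda\int_B u^2\,d\rho^+ : u\in\cF^B,\ \int_B u^2\,dx=1\right\} \\
 &\qquad\ge\inf\left\{\cE^Y(u,u)-\lambda\int_D u^2\,d\rho^+ : u\in\cF^D,\ \int_D u^2\,dx=1\right\}>0,
\end{align}
the last inequality being \eqref{eq-1-eigen}. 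This step uses only set inclusion, so it is insensitive to whether $B$ is relatively compact in $D$ and does not re-invoke the compact embedding that underlies \eqref{eq-1-eigen}.

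Next, I may assume $\rho^+(B)>0$: if $\rho^+(B)=0$ then $\mu^+(B)=\xi_{G^+}(B)=0$, so $NF^+=0$ Lebesgue-a.e.\ on $B$, and by the L\'evy system formula \eqref{Levy-sys} together with the absolute continuity of the occupation measure of $\bfM$ one gets $\bE_x[A^{\eta}_{\tau_B}]=0$, whence $A^{\eta}_{\tau_B}=0$ $\bP_x$-a.s.\ and the claim is trivial. With $\rho^+(B)>0$ I would follow Lemma~\ref{lem-gauge-01} step by step: the displayed inequality and \cite[Remark~2.3]{TT} give
\begin{align}
 \inf\left\{\cE^Y(u,u) : u\in\cF^B,\ \lambda\int_B u^2\,d\rho^+=1\right\}>1,
\end{align}
and then \cite[Theorem~5.1]{Ch} shows that $(B,\lambda\rho^+)$ is gaugeable with respect to $\bfY$, that is, $\sup_{x\in\bR^d}\bE_x^Y[\exp(\lambda A^{\rho^+}_{\tau_B})]<\infty$.

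Finally I would transfer this to $\bfM^-$ along the same chain of identities as in the proof of Lemma~\ref{lem-gauge-01}, with $\tau_D$ replaced by $\tau_B$: using the representations of the semigroups of $\bfY$ and of $\bfM^-$ as transforms of $\bfM$,
\begin{align}
 \bE_x^Y\!\left[\exp\!\left(\lambda A^{\rho^+}_{\tau_B}\right)\right]
 &=\bE_x\!\left[\exp\!\left(-A^-_{\tau_B}+A^{F^+}_{\tau_B}-A^{\xi_{G^+}}_{\tau_B}+\lambda A^{\rho^+}_{\tau_B}\right)\right] \\
 &=\bE_x^-\!\left[\exp\!\left(A^+_{\tau_B}+(\lambda-1)A^{\rho^+}_{\tau_B}\right)\right]
 =\bE_x^-\!\left[\exp\!\left(A^{\eta}_{\tau_B}\right)\right],
\end{align}
so that $\sup_{x\in\bR^d}\bE_x^-[\exp(A^{\eta}_{\tau_B})]<\infty$, which is precisely the gaugeability of $(B,A^{\eta})$ with respect to $\bfM^-$. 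The whole argument is essentially routine once Lemma~\ref{lem-gauge-01} is available; the one point that wants care is the passage to subdomains not compactly contained in $D$, which is exactly what the zero-extension inclusion in the first step buys us (no fresh compact-embedding input is needed for $B$), together with the elementary bookkeeping for the degenerate case $\rho^+(B)=0$.
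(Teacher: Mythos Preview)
Your argument is correct, but it takes a genuinely different route from the paper's own proof. The paper does \emph{not} re-run the variational machinery of Lemma~\ref{lem-gauge-01} on the subdomain $B$. Instead it argues purely probabilistically: using $\tau_B\le\tau_D$ and the strong Markov property at $\tau_B$, one splits
\[
\bE_x^-\!\left[e^{A^{\eta}_{\tau_D}}\right]
=\bE_x^-\!\left[\tau_B<\tau_D,\ e^{A^{\eta}_{\tau_B}}\,\bE^-_{X_{\tau_B}}\!\left[e^{A^{\eta}_{\tau_D}}\right]\right]
+\bE_x^-\!\left[\tau_B=\tau_D,\ e^{A^{\eta}_{\tau_B}}\right],
\]
and deduces $u_{D,A^{\eta}}(x)\ge\bigl(\inf_{z}\bE_z^-[e^{A^{\eta}_{\tau_D}}]\wedge 1\bigr)\,u_{B,A^{\eta}}(x)$. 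The infimum is then bounded away from zero by Jensen's inequality and the $R^-$-boundedness of $\rho^+$ (equation~\eqref{GB}). Thus boundedness of $u_{D,A^{\eta}}$ (Lemma~\ref{lem-gauge-01}) directly forces boundedness of $u_{B,A^{\eta}}$.

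The trade-offs: the paper's approach is shorter, entirely self-contained, and yields the quantitative comparison $u_{B,A^{\eta}}\le C\,u_{D,A^{\eta}}$ without re-invoking \cite[Theorem~5.1]{Ch} or \cite[Remark~2.3]{TT}; it also sidesteps the case split on $\rho^+(B)$. Your approach is more analytic and makes transparent \emph{why} gaugeability descends---the spectral gap \eqref{eq-1-eigen} is monotone under the inclusion $\cF^B\hookrightarrow\cF^D$---which is a clean observation in its own right. One small cosmetic point: in the degenerate case you conclude ``$\bE_x[A^{\eta}_{\tau_B}]=0$, whence $A^{\eta}_{\tau_B}=0$''; since $A^{\eta}$ need not be nonnegative when $\lambda<1$, it is cleaner to argue componentwise that each of $A^{\mu^+}_{\tau_B}$, $A^{F^+}_{\tau_B}$, $A^{\rho^+}_{\tau_B}$ vanishes a.s.\ (they are nonnegative with zero mean), which gives the same conclusion.
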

  \begin{proof}
  Note that $\tau_B \le \tau_D$, $\bP_x^{-}$-a.s. Hence
   by the strong Markov property,
   \begin{align}
    \bE_x^-\left[\exp\left(A_{\tau_D}^{\eta}\right)\right]
    &= \bE_x^{-}\left[\tau_B < \tau_D,\ \exp\left(A_{\tau_D}^{\eta}\right)\right]
    + \bE_x^{-}\left[\tau_B = \tau_D,\ \exp\left(A_{\tau_D}^{\eta}\right)\right] \\
    &= \bE_x^{-} \left[\tau_B < \tau_D,\
    \exp\left(A_{\tau_B}^{\eta}\right)
    \bE_{X_{\tau_B}}^{-}\left[\exp\left(A_{\tau_D}^{\eta}\right)\right]\right] \\
    &\hspace{1cm} + \bE_x^{-}\left[\tau_L = \tau_B,\ \exp\left(A_{\tau_B}^{\eta}\right)
    \right] \\
    &\ge \left(\inf_{z \in \bR^d} \bE_z^{-}
    \left[\exp\left(A_{\tau_D}^{\eta}\right)\right]\right)
    \bE_x^{-}\left[\tau_B < \tau_D,\ \exp\left(A_{\tau_B}^{\eta}\right)\right] \\
    &\hspace{1cm} + \bE_x^{-}\left[\tau_L = \tau_B,\ \exp\left(A_{\tau_B}^{\eta}\right)
    \right] \\
    &\ge \left(\inf_{z \in \bR^d} \bE_z^{-}\left[\exp\left(A_{\tau_D}^{\eta}\right)\right]
    \wedge 1\right) \bE_x^{-}\left[\exp\left(A_{\tau_B}^{\eta}\right)\right].
   \end{align}
   By the Jensen's inequality and \eqref{GB}, we know that
   \begin{align}
    \inf_{z \in \bR^d}
    \bE_z^{-}[\exp(\lambda A_{\tau_D}^{\eta})]
     &\ge \inf_{z \in \bR^d} \bE_z^{-}
     [\exp(-\lambda A_{\tau_D}^{\eta})] 
    \ge  \exp \left(- \lambda \sup_{z \in \bR^d} \bE_z^{-}
      [A_{\tau_D}^{\eta}]\right)
    > 0.
    \la{gauge-01}
   \end{align}
   Hence if $u_{D, A^{\eta}}$ is bounded on $\bR^d$,
   $u_{B, A^{\eta}}$ is also bounded.
   If we take  $D$ as a bounded regular domain so large that $\rho^+(D) > 0$,
   the function $u_{B, A^{\eta}}$ is bounded on $\bR^d$ since 
   $u_{D,A^{\eta}}$ is bounded on $\bR^d$ by Lemma \ref{lem-gauge-01}.
  \end{proof}

  Note that the $\cH^{\mu + (\lambda-1)\rho^+, F}$-harmonic function on $D$ satisfies
  that for any relatively compact open set $U \subset D$, 
  \begin{align}
   h(x) &= \bE_x\left[\exp(A_{\tau_U} +
   (\lambda - 1) A_{\tau_U}^{\rho^+}) h(X_{\tau_U})\right],
   \quad \text{for any } x \in U. \\
   %&\left(= \bE_x^{-}\left[\exp\left(A_{\tau_U}^+
   %+ (\lambda - 1) A_{\tau_U}^{\rho^+} \right) h(X_{\tau_U})\right]\right)
  \end{align}
  We call the function $h$ ``{\it $\lambda$-harmonic}''.
  Note that the $\cH^{\mu,F}$-harmonicity is equivalent to the $1$-harmonicity.

 \begin{lemma}
   \la{lem-Kato-1}
  Let $D_1, D_2$ be relatively compact open subsets in $\bR^d$.
  Suppose that $(D_1 \cup D_2, A^{\eta})$ is gaugeable with respect to $\bfM^-$. 
   If $h$ is a strictly positive $\lambda$-harmonic on
   $D_1$ and $D_2$,
   then $h$ is $\lambda$-harmonic on $D_1 \cup D_2$.
  \end{lemma}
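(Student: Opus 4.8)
The plan is to run the classical ``pasting of harmonic functions on two overlapping domains'' argument in its probabilistic form (as in \cite{BB}), carefully adapted to the jump process $\bfM$ and the purely discontinuous additive functional $A^F$. Throughout I write $e(t):=\exp\!\big(A_t+(\lambda-1)A^{\rho^+}_t\big)=\exp(-A^-_t)\exp(A^\eta_t)$, a multiplicative functional of $\bfM$, and, for a bounded open set $W$, $H_Wg(x):=\bE_x[e(\tau_W)g(X_{\tau_W})]$, so that ``$h$ is $\lambda$-harmonic on an open set $O$'' means $H_Wh=h$ on $W$ for every relatively compact open $W\subseteq O$. Put $D:=D_1\cup D_2$. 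I fix a relatively compact open $U\subseteq D$ and must show $H_Uh=h$ on $U$. Setting $V_i:=U\cap D_i$, these are relatively compact open subsets of $D_i$ with $V_1\cup V_2=U$, so by hypothesis $H_{V_i}h=h$ on $V_i$, and trivially $H_{V_i}h=h$ off $V_i$ as well (there $\tau_{V_i}=0$); hence $H_{V_i}h=h$ on all of $\bR^d$ for $i=1,2$.

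Next I would introduce the ``ping-pong'' stopping times: for $x\in U$, set $T_0:=0$ and, inductively, $T_{2k+1}:=T_{2k}+\tau_{V_1}\circ\theta_{T_{2k}}$ and $T_{2k+2}:=T_{2k+1}+\tau_{V_2}\circ\theta_{T_{2k+1}}$. These are non-decreasing with $T_n\le\tau_U$ for all $n$ (the path lies in $U$ on $[0,T_n]$). Applying the strong Markov property at $T_{2k}$, the additivity $A_{\cdot}\circ\theta_{T_{2k}}=A_{T_{2k}+\cdot}-A_{T_{2k}}$ (and likewise for $A^{\rho^+}$), and $H_{V_1}h=h$ on $\bR^d$, one obtains $\bE_x[e(T_{2k})h(X_{T_{2k}})]=\bE_x[e(T_{2k+1})h(X_{T_{2k+1}})]$, and symmetrically through $V_2$; by induction $h(x)=\bE_x[e(T_n)h(X_{T_n})]$ for every $n$. (In particular $\big(e(T_n)h(X_{T_n})\big)_n$ is a non-negative $\bP_x$-martingale.)

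The substance is the passage to the limit. Let $T_\infty:=\lim_nT_n\le\tau_U$. I would first record the set-theoretic facts $U\cap\partial V_1\subseteq V_2$ and $U\cap\partial V_2\subseteq V_1$, immediate from $V_i=U\cap D_i$ and $U\subseteq D_1\cup D_2$, whence $U\cap\partial V_1\cap\partial V_2=\varnothing$. Then I would show that a.s.\ there is a finite $N$ with $X_{T_N}\notin U$; equivalently, the ping-pong cannot run forever inside $U$. Indeed, if $X_{T_n}\in U$ for all $n$ and $T_\infty<\tau_U$, then there are infinitely many legs exiting $V_1$ and infinitely many exiting $V_2$, and since the initial and terminal points of these legs all converge to $X_{T_\infty}$ by quasi-left-continuity, $X_{T_\infty}\in\partial V_1\cap\partial V_2\cap U=\varnothing$, impossible; if instead $T_\infty=\tau_U$, quasi-left-continuity gives $X_{T_n}\to X_{\tau_U}$, so $X_{\tau_U}\in\overline U$, yet $X_{\tau_U}\notin U$ ($U$ open), forcing $X_{\tau_U}\in\partial U$ — a $\bP_x$-null event, since a symmetric $\alpha$-stable process leaves an open set by a jump and thus $\bP_x(X_{\tau_U}\in\partial U)=0$. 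Hence a.s.\ $T_n=\tau_U$ for all large $n$, and so $e(T_n)h(X_{T_n})=e(\tau_U)h(X_{\tau_U})$ for all large $n$ — an \emph{eventual equality}, so no jump of $X$ nor of $A^F$ is lost in the limit. Finally, rewriting the expectation under $\bfM^-$, $\bE_x[e(T_n)h(X_{T_n})]=\bE^-_x[\exp(A^\eta_{T_n})h(X_{T_n})]$, and using $0<\exp(A^\eta_{T_n})h(X_{T_n})\le\|h\|_\infty\exp\!\big(A^+_{\tau_D}+(\lambda-1)^+A^{\rho^+}_{\tau_D}\big)$ (monotonicity of the increasing components of the exponent and $T_n\le\tau_U\le\tau_D$), which is $\bP^-_x$-integrable by the assumed gaugeability of $(D,A^\eta)$ with respect to $\bfM^-$ (Lemma~\ref{lem-gaugeability}) together with $\mu^+\in\bfK_\infty^-$ and $F^+\in\bfJ_\infty^-$, dominated convergence yields $h(x)=\bE_x[e(\tau_U)h(X_{\tau_U})]=H_Uh(x)$. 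As $U\subseteq D$ was arbitrary, $h$ is $\lambda$-harmonic on $D_1\cup D_2$.

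The hard part will be the termination/limit step. Its continuous-path analogue — the sheaf property of harmonic functions, which underlies the argument of \cite{BB} — is classical, but $\bfM$ is a jump process: a leg may exit $V_i$ by a jump landing deep inside $U$, and $A^F$ contributes genuine jumps to $e(t)$. Handling this requires the full force of quasi-left-continuity of $\bfM$ and of the additive functionals (to control $X_{T_n}$ along $T_n\uparrow T_\infty$), the geometric identity $U\cap\partial V_1\cap\partial V_2=\varnothing$ (to exclude $X_{T_\infty}$ from the interior), and the jump-type exit property $\bP_x(X_{\tau_U}\in\partial U)=0$ (to force the ping-pong to terminate at an exact equality $T_N=\tau_U$). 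This is precisely the point the introduction flags as absent in the local-perturbation case.
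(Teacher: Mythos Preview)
Your ping-pong setup and the induction giving $h(x)=\bE^-_x[\exp(A^\eta_{T_n})h(X_{T_n})]$ match the paper's. The gap is in the passage to the limit when $\lambda<1$. Your dominating function is $\|h\|_\infty\exp\!\big(A^+_{\tau_D}+(\lambda-1)^+A^{\rho^+}_{\tau_D}\big)$; for $\lambda<1$ this is $\|h\|_\infty\exp(A^+_{\tau_D})$, and nothing in the hypotheses makes $\bE^-_x[\exp(A^+_{\tau_D})]$ finite. What is assumed is gaugeability of $(D,A^\eta)$, i.e.\ $\sup_x\bE^-_x\big[\exp\big(A^+_{\tau_D}+(\lambda-1)A^{\rho^+}_{\tau_D}\big)\big]<\infty$; since $(\lambda-1)A^{\rho^+}_{\tau_D}\le0$ this gives no control on $\exp(A^+_{\tau_D})$. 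Equivalently (passing to $\bfY$ as in Lemma~\ref{lem-gauge-01}), gaugeability of $(D,A^\eta)$ says the infimum $\inf\{\cE^Y(u,u):u\in\cF^D,\ \int u^2\,d\rho^+=1\}$ exceeds $\lambda$, whereas integrability of your majorant would require it to exceed $1$; for $\lambda<1$ and $D$ large this can fail. The conditions $\mu^+\in\bfK_\infty^-$, $F^+\in\bfJ_\infty^-$ do not help: they bound $\bE^-_x[A^+_t]$ for small $t$, not exponential moments at $\tau_D$.

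The paper avoids this by not looking for a monotone majorant. It uses quasi-left-continuity of $X$ and of $A^\eta$ (Hunt processes admit no predictable jumps) to obtain almost-sure convergence $\exp(A^\eta_{T_k})h(X_{T_k})\to\exp(A^\eta_{\tau_U})h(X_{\tau_U})$ on all of $\Omega$ --- so your detour through ``eventual equality'' and the unproved claim $\bP_x(X_{\tau_U}\in\partial U)=0$ is unnecessary --- and then establishes uniform integrability directly from gaugeability of $(U,A^\eta)$: by the strong Markov property
\[
\exp(A^\eta_{T_k})\ \le\ \Big(\inf_{y\in\bR^d}\bE^-_y[\exp(A^\eta_{\tau_U})]\Big)^{-1}\,\bE^-_x\!\big[\exp(A^\eta_{\tau_U})\ \big|\ \cF_{T_k}\big],
\]
the infimum being positive by Jensen and \eqref{GB}. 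Conditional expectations of a fixed integrable random variable form a uniformly integrable family, so $\{\exp(A^\eta_{T_k})h(X_{T_k})\}_k$ is UI and the limit passes. This argument works uniformly for all values of $\lambda$.
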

  \begin{proof}
   Following the argument of
   the proof in \cite[Lemma 4.4]{BB},
   we take care of the discontinuity of additive functionals.

   Let $U_1$ (resp. $U_2$) be any relatively compact open set in $D_1$ (resp. $D_2$).
   Let $D = D_1 \cup D_2$ and $U = U_1 \cup U_2$.
   Noting $A^{\eta} = A^+ + (\lambda - 1)A^{\rho^+}$, 
   it suffices to prove that for every relatively compact open set $U$ of $D$,
   \begin{equation}
    h(x) = \bE_x^-[\exp(A_{\tau_U}^{\eta}) h(X_{\tau_U})], \quad
     x \in U.
     \la{eq-4-11}
   \end{equation}
   By the definition of $\lambda$-harmonicity on $D_1$ and $D_2$,
   we know for $i = 1,2$
   \begin{equation}
    h(x) = \bE_x^- [\exp (A_{\tau_{U_i}}^{\eta}) h(X_{\tau_{U_i}})], \quad  x \in U_i.
     \la{eq-4-12}
   \end{equation}
   Let $x \in U$ be fixed. We define
   $T_0 = 0$,
   \[
   T_{2n-1} = T_{2n-2} + \tau_{U_1} \circ \theta_{T_{2n-2}}\ \text{and}\ 
   T_{2n} = T_{2n-1} + \tau_{U_2} \circ \theta_{T_{2n-1}}, \quad
   n = 1,2, \ldots.
   \]
   First we will show that
   \begin{equation}
    h(x) = \bE_x^- [ \exp(A_{T_k}^{\eta}) h(X_{T_k})], \quad k \ge 0.
     \la{eq-4-14}
   \end{equation}
   Clearly, \eqref{eq-4-14} holds for $k = 0$ and $1$.
   Suppose that for some $n \in \bN$ it follows that
   \[
    h(x) = \bE_x [ \exp(A_{T_{2n-1}}^{\eta}) h(X_{T_{2n-1}})].
   \]
   We have $X_{T_{2n-1}} \in U_2 \setminus U_1$ on the set
   $\{ T_{2n - 1} < \tau_U\}$ and $T_{2n - 1} = T_{2n}$ on the set
   $\{ T_{2n - 1} = \tau_U\}$.
   By \eqref{eq-4-12} and the strong Markov property we have
   \begin{align}
    h(x) &= \bE_x^{-}\left[\exp\left(A_{T_{2n-1}}^{\eta}\right)h(X_{T_{2n-1}})\right] \\
    &{%\ma
    = \bE_x^{-}\left[ T_{2n - 1} = \tau_U,\ \exp \left(A_{T_{2n-1}}^{{\eta}}\right)
    h(X_{T_{2n-1}})\right]} \\
    &{%\ma
    \hspace{1cm} + \bE_x^{-} \left[ T_{2n - 1} < \tau_U,\ \exp\left(A_{T_{2n-1}}^{\eta}\right)
    \bE_{X_{T_{2n-1}}}^{-}\left[\exp\left(A_{\tau_{U_2}}\right) h(X_{\tau_{U_2}})\right]\right]} \\
    &{%\ma
    = \bE_x^{-}\left[ T_{2n - 1} = \tau_U,\ \exp \left(A_{T_{2n-1}}^{{\eta}}\right)
    h(X_{T_{2n-1}})\right]} \\
    &{%\ma
    \hspace{1cm} + \bE_x^{-} \bigg[ T_{2n - 1} < \tau_U,\ \exp\left(A_{T_{2n-1}}^{\eta}\right)} \\
    &{%\ma
    \hspace{2cm}
    \times \bE_x^{-}\left[\exp\left(A_{\tau_{U_2}
    \circ \theta_{T_{2n-1}}}^{\eta} \circ \theta_{T_{2n-1}}\right)
    h(X_{T_{2n-1} + \tau_{U_2} \circ \theta_{T_{2n-1}}})\bigg| \cF_{T_{2n-1}}\right]
    \bigg]} \\
    &{%\ma
    = \bE_x^{-} \left[ T_{2n - 1} = \tau_U,\ \exp \left(A_{T_{2n-1}}^{{\eta}}\right)
    h(X_{T_{2n-1}})\right]} \\
    &{%\ma
    \hspace{2cm} + \bE_x^{-} \left[T_{2n-1} < \tau_U,\ 
    \exp\left(A_{T_{2n-1} + \tau_{U_2}\circ \theta_{T_{2n-1}}}^{\eta}\right)
    h(X_{T_{2n-1} + \tau_{U_2} \circ \theta_{T_{2n-1}}})\right]} \\
    %&= \bE_x^{-} \left[ T_{2n - 1} = \tau_U,\ \exp \left(A_{T_{2n}}^{{\eta}}\right)
    %h(X_{T_{2n}})\right] \\
    %&\hspace{1cm} + \bE_x^{-} \left[T_{2n-1} < \tau_U,\ %\exp\left(A_{T_{2n-1}}^{{\eta}}\right)
    %\bE_{X_{T_{2n-1}}}
    %\left[\exp\left(A_{\tau_{U_2}}^{{\eta}}\right) h(X_{\tau_{U_2}})\right]\right] \\
    &= \bE_x^{-} [ T_{2n-1} = \tau_U,\ \exp(A_{T_{2n}}^{{\eta}}) h(X_{T_{2n}})]
    + \bE_x^{-} [ T_{2n-1} < \tau_U , \exp(A_{T_{2n}}^{{\eta}}) h(X_{T_{2n}})] \\
    &= \bE_x^{-} [ \exp(A_{T_{2n}}^{{\eta}}) h(X_{T_{2n}})].
   \end{align}
   Similarly, if for some $n \in \bN$
   \[
    h(x) = \bE_x^{-} [\exp(A_{T_{2n}}^{{\eta}}) h(X_{T_{2n}})],
   \]
   then
   \[
    h(x) = \bE_x^{-} [\exp(A_{T_{2n + 1}}^{{\eta}}) h(X_{T_{2n+1}})].
   \]
   By induction, \eqref{eq-4-14} holds for any $k$.
   Since any Hunt process (including $\bfM$) admits no predictable jump (cf. \cite[Theorem A.3.2]{FOT}),
   we see that $A^{\eta}$ (especially in pure jump part) is quasi-left-continuous.
   Hence, 
   it follows $\bP_x$-a.s. that $T_{k} \uparrow \tau_U,\ A_{T_k}^{\eta} \to A_{\tau_U}^{\eta}$
   and $X_{T_k} \to X_{\tau_U}$, $\bP_x$-a.s. 
   as $k \to \infty$. By the continuity of $h$ on $\ov{U}$, we obtain 
   \begin{equation}
    \exp(A_{T_k}^{{\eta}}) h(X_{T_k}) \to \exp(A_{\tau_U}^{{\eta}}) h(X_{\tau_U}),\
     \text{$\bP_x$-a.s.}
     \ \ttr{as $k \to \infty$}.
     \la{eq-4-15}
   \end{equation}
   By \eqref{eq-4-14} and Fatou's lemma,
   \begin{equation}
    h(x) \ge \bE_x^{-} [\exp(A_{\tau_U}^{{\eta}}) h(X_{\tau_U})].
     \la{eq-4-16}
   \end{equation}
   Since $h > 0$ on $D \sm U$ and $\bP_x(X_{\tau_U} \in D \sm U) > 0$,
   the right-hand side of \eqref{eq-4-16} is strictly positive.
   Since $D$ is bounded, 
   we see $(U, A^{\eta})$ is gaugeable with respect to $\bfM^{-}$ by
   Lemma \ref{lem-gaugeability}. 
   To complete the proof, we note that, for $k \in \bN$ we have
   \[
   \bE_x^{-}[\exp(A_{\tau_U}^{{\eta}}) |\cF_{T_k}] = \exp (A_{T_k}^{{\eta}})
   \bE_{X_{T_k}}^-
   [\exp(A_{\tau_U}^{{\eta}})], \quad \text{$\bP_x$-a.s.}
   \]
   hence $\exp(A_{T_k}^{{\eta}}) \le c \bE_x^{-} [\exp(A_{\tau_U}^{{\eta}}) | \cF_{T_k}]$,
   where $c = [ \inf_{y \in \bR^d} \bE_y^- [\exp(A_{\tau_U}^{{\eta}}) ]]^{-1}$ is
   finite on account of \eqref{gauge-01}.
   Thus $\{\exp(A_{T_k}^{\eta}),\ k \ge 1\}$ is
   uniformly integrable, so is $\{\exp(A_{T_k}^{\eta}) h(X_{T_k}), k \ge 1\}$
   on account of the boundedness of $h$. 
   We put $\cP = \{ T_k < \tau_U,\ k = 1,2, \ldots\}$ and
   $\cO = \{ T_k = \tau_U\ \ttr{for some $k \in \bN$}\}$.
   By the uniform integrability and
   \eqref{eq-4-15} we obtain \\
   \if
   {\ma (14)から$\exp(A_{T_k}^{\eta})h(X_{T_k})$は
   $\exp(A_{\tau_U}^{\eta}) h(X_{\tau_U})$に概収束するので，
   これは確率収束している．\\
   そして，$\{\exp(A_{T_k}) h(X_{T_k}),\ k \ge
   1\}$は一様可積分なので，これは$L^1$-収束も意味している．}
   \fi
   \begin{equation}
    \lim_{k \to \infty} \bE_x^{-} [T_k < \tau_U , \exp(A_{T_k}^{{\eta}})
     h(X_{T_k})]
     = \bE_x^{-} [\cP, \exp(A_{\tau_U}^{{\eta}}) h(X_{\tau_U})].
     \la{eq-4-17}
   \end{equation}
   By the monotone convergence theorem we have
   \begin{equation}
    \lim_{k \to \infty} \bE_x^{-} [ T_k = \tau_U , \exp(A_{T_k}^{{\eta}})
     h(X_{T_k})] = \bE_x^{-} [ \cO , \exp(A_{\tau_U}^{{\eta}}) h(X_{\tau_U})].
     \la{eq-4-18}
   \end{equation}
   By \eqref{eq-4-14}, \eqref{eq-4-17} and \eqref{eq-4-18},
   the equality \eqref{eq-4-11}
   holds. 
  \end{proof}

  \begin{proposition}
   \la{prop-4-2}
   Suppose that the assumption {\rm (A)} holds.
   Let $h$ be the $\lambda$-ground state in Theorem \ref{thm-main-01}.
   Then for any $z \in \bR^d$,
   there exists a constant $r > 0$ such that  the function $h$
   satisfies 
   \begin{equation}
    h(x) = \bE_x \left[ \exp \left(A_{\tau_{B(z,r)}}
		  + (\lambda - 1)A_{\tau_{B(z,r)}}^{\rho^+}\right)
		  h(X_{\tau_{B(z,r)}})\right], \quad x \in B(z,r). 
   \end{equation}
  \end{proposition}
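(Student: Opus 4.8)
\emph{Proof plan.} The plan is to record first that the $\lambda$-ground state $h$ is a bounded, continuous \emph{weak} solution of the relevant Schr\"odinger equation, then to conjugate the problem onto the transient process $\bfY$, and finally to push the weak identity to the probabilistic one by the small-domain technique of \cite{K}. Concretely, by Theorem \ref{thm-main-01} we may take $h$ strictly positive, bounded and continuous with $\int_{\bR^d}h^2\,d\rho^+=1$; since $h$ attains the infimum $\lambda$ in \eqref{e:GS}, the first variation of $u\mapsto\cE^Y(u,u)$ under the constraint $\int_{\bR^d}u^2\,d\rho^+=1$ gives the Euler--Lagrange identity $\cE^Y(h,\phi)=\lambda\int_{\bR^d}h\phi\,d\rho^+$ for all $\phi\in\cF_e$. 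Recalling $\cE^Y=\cE^{\mu,F}(\cdot,\cdot)+\int_{\bR^d}(\cdot)^2\,d\rho^+$ from \eqref{Y-form}, this reads $\cE^{\mu,F}(h,\phi)=(\lambda-1)\int_{\bR^d}h\phi\,d\rho^+$, i.e. $h$ solves $\cH^{\mu+(\lambda-1)\rho^+,F}h=0$ weakly; in particular the identity holds for every $\phi$ in the part space $\cF^{B}$, where $B:=B(z,r)$. We fix $z$ and choose $r>0$ so small that assumption {\rm(A)} holds, noting that balls are bounded regular domains for $\bfM$.

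Next I would reformulate the assertion in terms of $\bfY$. Writing $A=A^\mu+A^F$ with $A^\mu=A^{\mu^+}-A^{\mu^-}$ and $A^F=A^{F^+}-A^{F^-}$, grouping $-A^{\mu^-}-A^{F^-}=-A^-$ so that the killing by $A^-$ turns $\bE_x$ into $\bE_x^-$, applying the Feynman--Kac transform from $\bfM^-$ to $\bfY$ of Lemma \ref{lem-irr} to convert $e^{A^{F^+}_{\tau_D}}$ into an $\bE_x^Y$-expectation carrying an extra factor $e^{A^{\xi_{G^+}}_{\tau_D}}$, and using $A^{\mu^+}+A^{\xi_{G^+}}=A^{\rho^+}$, one obtains, for every bounded open $D$ and $0\le f\in\cB_b(\bR^d)$,
\[
 \bE_x\!\left[\exp\!\big(A_{\tau_D}+(\lambda-1)A^{\rho^+}_{\tau_D}\big)f(X_{\tau_D})\right]
 =\bE_x^Y\!\left[\exp\!\big(\lambda A^{\rho^+}_{\tau_D}\big)f(X_{\tau_D})\right].
\]
Hence Proposition \ref{prop-4-2} is equivalent to $h(x)=\bE_x^Y[\exp(\lambda A^{\rho^+}_{\tau_B})h(X_{\tau_B})]$ for $x\in B$, and, reading the Euler--Lagrange identity through the same correspondences, $h$ is a bounded weak solution, relative to $\bfY$, of this equation with Green-tight Kato potential $\lambda\rho^+$ (Lemma \ref{lem-tight-02}).

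Then I would carry out the small-domain argument. Taking a large bounded regular domain $D_0$ with $\rho^+(D_0)>0$ (available since $\mu^+\not\equiv0$), Lemmas \ref{lem-gauge-01} and \ref{lem-gaugeability} give that $(B,\lambda\rho^+)$ is gaugeable with respect to $\bfY$, so that $u(x):=\bE_x^Y[\exp(\lambda A^{\rho^+}_{\tau_B})h(X_{\tau_B})]$ is bounded on $\bR^d$, equals $h$ quasi-everywhere on $B^c$, and, by the strong Markov property, is $\lambda$-harmonic, hence a bounded weak solution, on $B$. It remains to identify $u$ with $h$, and this is precisely where assumption {\rm(A)} enters: the smallness of $\|R^Y(1_{B(z,r)}\mu)\|_\infty$ makes the Picard/Neumann-series scheme of \cite{K} on $B$ converge, yielding uniqueness of the bounded weak solution of the Dirichlet problem with prescribed exterior values; equivalently, it produces, as in \eqref{eq-1-eigen}--\eqref{eq-2-eigen}, a coercive lower bound $\cE^Y(v,v)-\lambda\int_{\bR^d}v^2\,d\rho^+\ge c\,\cE_1(v,v)$ on $\cF^B$ with some $c>0$. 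Applying this to $w:=h-u$, which lies in the extended part space of $(\cE^Y,\cF^B)$ and satisfies $\cE^Y(w,\phi)=\lambda\int_{\bR^d}w\phi\,d\rho^+$ for $\phi\in\cF^B$, forces $w=0$, i.e. $h=u$ on $B$.

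The main obstacle I anticipate is the bookkeeping forced by the fact that the ground state lies only in the extended Dirichlet space and \emph{not} in $L^2(\bR^d,dx)$: the Feynman--Kac identities of the reduction step and, above all, the verification that $w=h-u$ is an admissible element on which the coercive estimate may be tested must be handled by localisation---using boundedness and continuity of $h$, the comparabilities $\cE^Y\sim\cE^-\sim\cE$ so that the part spaces over $B$ agree, and the gauge bounds of Lemma \ref{lem-gaugeability}---rather than by a global $L^2$-argument; one must also check that the probabilistic object $u$ is genuinely a weak solution. I would remark that assumption {\rm(A)} is mild: since $|\mu|\in\bfK$ (hence $\rho^+\in\bfK$) is a Kato measure for $\bfY$ as well, $\|R^Y(1_{B(z,r)}|\mu|)\|_\infty\to0$ as $r\downarrow0$, so {\rm(A)} holds for all sufficiently small $r$. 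Finally, once Proposition \ref{prop-4-2} is established, Lemma \ref{lem-Kato-1} (patching $\lambda$-harmonicity across overlapping domains) will extend it from small balls to arbitrary relatively compact open sets.
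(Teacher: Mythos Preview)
Your plan is essentially the paper's own route: derive the Euler--Lagrange identity $\cE^{Y}(h,\varphi)=\lambda\int h\varphi\,d\rho^+$, recast the desired statement as $h(x)=\bE_x^{Y}[\exp(\lambda A^{\rho^+}_{\tau_B})h(X_{\tau_B})]$, use assumption {\rm(A)} to make $\cE^{Y_0}:=\cE^{Y}-\lambda\int(\cdot)^2\,1_B\,d\rho^+$ coercive on $\cF$, and then invoke \cite{K}.

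The one notable difference is organisational. The paper does \emph{not} construct the candidate $u(x)=\bE_x^{Y}[\exp(\lambda A^{\rho^+}_{\tau_B})h(X_{\tau_B})]$, verify that $u$ is a weak solution, and then argue $w=h-u=0$ by uniqueness; instead, once it has shown that $(\cE^{Y_0},\cF)$ is a strongly sectorial coercive closed form with lower bound $0$ (your inequalities \eqref{eq-K1}--\eqref{eq-K2} are exactly those of the paper), it applies \cite[Theorem~7.4]{K} together with (SF) for $\bfY$ \emph{directly} to the weak solution $h$ to obtain the probabilistic representation in one stroke. This bypasses the two items you flag as obstacles---showing $u$ is a weak solution and showing $w$ lies in the extended part space---because they are absorbed into Kuwae's theorem. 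Consequently the gauge lemmas (Lemma~\ref{lem-gauge-01} and Lemma~\ref{lem-gaugeability}) are not used here at all; the paper reserves them for the patching step (Lemma~\ref{lem-Kato-1}). Your argument is not wrong, but it does more work than necessary and postpones precisely the technical verifications that the direct appeal to \cite[Theorem~7.4]{K} is designed to avoid.
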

  \begin{proof}
   We see from \cite[Theorem 3.2]{Ta}
   that for any $\varphi \in C_0^{\infty}(B(z,r))$,
   \begin{equation}
    \cE^{Y}(h, \varphi) - \lambda \int_{\bR^d}
     h\varphi\, d\rho^+ = 0.
     \la{4-1}
   \end{equation}
   Put $\cE^{Y_0}(u,u) = \cE^{Y}(u,u) - \lambda \int u^2 1_{B(z,r)} d\rho^+$. 
   Since $1_{B(z,r)} \rho^+$ is in $\bfK_{\infty}(\bfY)$, we find
   that by the Schwarz inequality and Theorem \ref{lem-Poincare-SV}
   \begin{align}
    |\cE^{Y_0}(u,v)| &\le
   (1 + \lambda\| R^{Y} (1_{B(z,r)} \rho^+) \|_{\infty})
   \sqrt{\cE^{Y}(u,u)}\sqrt{\cE^{Y}(v,v)}, \quad
   u,v \in \cF
   \la{eq-K1}
   \end{align}
   Moreover, we see from the assumption (A) that for any
   $z \in \bR^d$, there exists $r > 0$ such that
   $\lambda \| R^Y ( 1_{B(z,r)} \rho^+)\|_{\infty} < 1$.
   Hence
   \begin{equation}
    \cE^{Y_0}(u,u) \ge (1 - \lambda\| R^{Y} (1_{B(z,r)} \rho^+)\|_{\infty})\,
     \cE^{Y}(u,u),
     \quad u \in \cF.
     \la{eq-K2}
   \end{equation}
   Therefore we find that $(\cE^{Y_0}, \cF)$ is
   the strongly sectorial coercive closed form on $L^2(\bR^d; dx)$
   with lower bound $0$. 
   Then we see that the function $h$ satisfies
   \begin{align}
    h(x) &= \bE_x^{Y} \left[ \exp
			  \left( \lambda A_{\tau_{B(z,r)}}^{\rho^+} \right)
			   h(X_{\tau_{B(z,r)}})
					  \right] \\
    &= \bE_x \left[ \exp \left(A_{\tau_{B(z,r)}}
			   - A_{\tau_{B(z,r)}}^{\rho^+} + 
			   \lambda A_{\tau_{B(z,r)}}^{\rho^+}\right)
    h(X_{\tau_{B(z,r)}})\right] \\
    &= \bE_x\left[\exp\left(A_{\tau_{B(z,r)}}
    + (\lambda - 1) A_{\tau_{B(z,r)}}^{\rho^+}\right)
    h(X_{\tau_{B(z,r)}})\right]
    , \quad x \in B(z,r)
   \end{align}
   by \cite[Theorem 7.4]{K} and (SF) of $\bfY$. 
   We obtain this proposition.
  \end{proof}

  \begin{theorem}
   \la{T:HF}
   The ground state $h$ is $\lambda$-harmonic on $\bR^d$, i.e.,
   $h$ satisfies that for any relatively compact open subset $U$ in $\bR^d$,
   \[
   h(x) = \bE_x\left[\exp\left(A_{\tau_U} + (\lambda - 1)A_{\tau_U}^{\rho^+}\right)
   h(X_{\tau_U})\right],
   \quad \forall x \in U.
   \]
  \end{theorem}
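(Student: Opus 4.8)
The plan is a \emph{covering-and-gluing} argument: the global $\lambda$-harmonicity is assembled from the local $\lambda$-harmonicity on small balls provided by Proposition \ref{prop-4-2}, patched together with Lemma \ref{lem-Kato-1}, after one checks the gaugeability hypothesis needed to invoke that lemma on the intermediate unions. Concretely, I would fix an arbitrary relatively compact open set $U\subset\bR^d$ and aim to verify the displayed identity for every $x\in U$. For each $z\in\overline U$, Proposition \ref{prop-4-2} (and its proof: equation \eqref{4-1} together with \cite[Theorem 7.4]{K} actually gives $\lambda$-harmonicity on every ball contained in $B(z,r)$, since $C_0^\infty(W)\subset C_0^\infty(B(z,r))$ and $\lambda\|R^{Y}(1_{W}\rho^+)\|_\infty<1$ for any open $W\subset B(z,r)$) yields a radius $r_z>0$ such that $h$ is $\lambda$-harmonic on $B(z,r_z)$ in the sense of Definition \ref{def-4-3}. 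Compactness of $\overline U$ gives a finite subcover $B_1,\dots,B_n$ with $\overline U\subset V:=\bigcup_{i=1}^n B_i$, a bounded open set on each piece of which $h$ is $\lambda$-harmonic.

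The second step is to secure gaugeability of the successive unions. Since $\rho^+=\mu^++\xi_{G^+}$ with $\mu^+\not\equiv 0$, I would choose $R>0$ so large that the ball $D_0:=B(0,R)$ contains $\overline V$ and satisfies $\rho^+(D_0)>0$; $D_0$ is a bounded regular domain for $\bfM$. Lemma \ref{lem-gauge-01} makes $(D_0,A^\eta)$ gaugeable with respect to $\bfM^-$, and \eqref{gauge-01} gives $\inf_{z}\bE_z^-[\exp(A^\eta_{\tau_{D_0}})]>0$. For any bounded open $G\subseteq D_0$ one has $\tau_G\le\tau_{D_0}$ $\bP_x^-$-a.s., so the strong Markov estimate used in the proof of Lemma \ref{lem-gaugeability} — which nowhere uses connectedness of the inner set — shows that $(G,A^\eta)$ is again gaugeable with respect to $\bfM^-$. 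In particular this holds for each partial union $G_k:=B_1\cup\dots\cup B_k$ and for each $B_k$.

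With this in hand I would induct on $k$. For $k=1$, $h$ is $\lambda$-harmonic on $G_1=B_1$ by Step 1. Assuming $h$ is $\lambda$-harmonic on $G_{k-1}$, note that $h$ is strictly positive, bounded and continuous (Theorem \ref{thm-main-01}) and $\lambda$-harmonic on the relatively compact open sets $G_{k-1}$ and $B_k$, while $(G_k,A^\eta)=(G_{k-1}\cup B_k,A^\eta)$ is gaugeable with respect to $\bfM^-$ by Step 2; hence Lemma \ref{lem-Kato-1} gives that $h$ is $\lambda$-harmonic on $G_k$. After $n$ steps $h$ is $\lambda$-harmonic on $V=G_n$. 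Since $\overline U\subset V$, the set $U$ is itself a relatively compact open subset of $V$, so the defining identity holds for $U$; as $U$ was arbitrary this proves that $h$ is $\lambda$-harmonic on $\bR^d$, and in particular $\cH^{\mu,F}$-harmonic when $\lambda=1$.

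The genuine obstacle is not this assembly but the limit passage \emph{inside} Lemma \ref{lem-Kato-1}: one must control the discontinuous additive functional $A^\eta=A^++(\lambda-1)A^{\rho^+}$ along the interlacing times $T_k\uparrow\tau_U$ so that $\exp(A^\eta_{T_k})h(X_{T_k})\to\exp(A^\eta_{\tau_U})h(X_{\tau_U})$ in the right sense. Because the underlying process is pure-jump and $A^\eta$ is not continuous, this rests on quasi-left-continuity of $A^\eta$ (no predictable jumps, \cite[Theorem A.3.2]{FOT}), and the combination of Fatou's lemma with a uniform-integrability bound furnished by the gaugeability of $(U,A^\eta)$ — here the strict positivity of $h$ on $D\setminus U$ is exactly what forces the Fatou inequality to be an equality. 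A minor caveat, worth flagging but harmless, is that the unions $G_k$ need not be connected, so one cannot literally cite "domain" versions of Lemmas \ref{lem-gauge-01}–\ref{lem-gaugeability}; their proofs, however, use only the inclusion $G_k\subseteq D_0$, so the argument above goes through verbatim.
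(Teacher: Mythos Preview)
Your proposal is correct and follows essentially the same covering-and-gluing strategy as the paper's own proof: local $\lambda$-harmonicity on small balls from Proposition \ref{prop-4-2}, then iterated application of Lemma \ref{lem-Kato-1} across a finite subcover of $\overline U$. Your write-up is in fact more careful than the paper's, since you explicitly secure the gaugeability hypothesis of Lemma \ref{lem-Kato-1} at each inductive step (via Lemmas \ref{lem-gauge-01}--\ref{lem-gaugeability}) and flag that the proof of Proposition \ref{prop-4-2} delivers $\lambda$-harmonicity on every sub-ball of $B(z,r)$.
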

    \begin{proof}
     If $h$ is $\lambda$-harmonic on $B(z_1, r_1)$ and $B(z_2, r_1)$, then
     $h$ is $\lambda$-harmonic on $B(z_1, r_1) \cup B(z_2, r_2)$ by Lemma
     4.4. For any $z \in \bR^d$, there exists $r > 0$
     such that $h$ is $\lambda$-harmonic on $B(z,r)$ by Theorem \ref{T:HF}. 
     Since such $\{ B(z_i, r_i)\}$ covers $U$, we obtain this theorem. 
    \end{proof}

    \begin{corollary}
     \la{cor-main}
     If $\lambda = 1$, the harmonic function $h$ of $\cH^{\mu,F}$
     satisfies for any relatively compact open subset $U$ in $\bR^d$, 
     \begin{align}
      h(x) = \bE_x\left[\exp\left(A_{\tau_U}\right) h(X_{\tau_U})\right],
      \quad \text{for any } x \in U.
     \end{align}
    \end{corollary}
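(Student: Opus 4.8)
The plan is to read off Corollary \ref{cor-main} directly from Theorem \ref{T:HF}. That theorem already produces, for the ground state $h$ of Theorem \ref{thm-main-01} and every relatively compact open $U \subset \bR^d$, the identity
\[
 h(x) = \bE_x\left[\exp\left(A_{\tau_U} + (\lambda-1)A_{\tau_U}^{\rho^+}\right) h(X_{\tau_U})\right], \qquad x \in U,
\]
where $A_t = A_t^{\mu} + A_t^{F}$ is the full (signed) perturbation functional. When $\lambda = 1$ the term $(\lambda-1)A_{\tau_U}^{\rho^+}$ vanishes identically, so the right-hand side reduces to $\bE_x[\exp(A_{\tau_U}) h(X_{\tau_U})]$, which is exactly the defining relation of a $\cH^{\mu,F}$-harmonic function in Definition \ref{def-4-3} taken with $D = \bR^d$. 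Since $h$ is bounded and continuous by Theorem \ref{thm-main-01}, it is an admissible function in that definition, so the corollary follows at once.

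Two routine points should be recorded to make the deduction clean. First, the right-hand side is finite: enlarging $U$ to a sufficiently large bounded regular domain $D$ (which has $\rho^+(D)>0$ since $\mu^+ \not\equiv 0$ and $\rho^+ \ge \mu^+$) and invoking the remark following Lemma \ref{lem-gauge-01}, one has $\sup_{x \in D}\bE_x[\exp(A_{\tau_D})] < \infty$ when $\lambda = 1$; together with the boundedness of $h$ this gives $\bE_x[\exp(A_{\tau_U})\,|h(X_{\tau_U})|] < \infty$ for every relatively compact open $U$. Second, it is precisely in the case $\lambda = 1$ that the variational problem \eqref{e:GS} forces $\cE^{\mu,F}(h,h) = (-\cH^{\mu,F}h,h)_2 = 0$, so $h$ genuinely is the harmonic function of $\cH^{\mu,F}$, and the displayed probabilistic identity is the pathwise expression of this vanishing.

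There is no real obstacle here: every analytic and probabilistic ingredient — the gaugeability estimates of Lemma \ref{lem-gauge-01} and Lemma \ref{lem-gaugeability}, the local identity of Proposition \ref{prop-4-2}, the two-set patching of Lemma \ref{lem-Kato-1}, and the covering argument in the proof of Theorem \ref{T:HF} — was already carried out for a general $\lambda > 0$, and the passage to $\lambda = 1$ is purely formal. The only thing worth a line of care is to note that imposing $\lambda = 1$ does not strengthen the standing hypotheses \eqref{ass-01}, $F^+ \in \bfA_\infty^-$, and (A) on which Theorem \ref{T:HF} relies: $\lambda = 1$ is a statement about the value of the infimum \eqref{e:GS}, not an extra restriction on the data $\mu$ and $F$, so all of those results remain available and the corollary is immediate.
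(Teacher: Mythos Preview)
Your proposal is correct and matches the paper's approach: the paper gives no explicit proof of the corollary, treating it as an immediate specialization of Theorem \ref{T:HF} to $\lambda = 1$, which is exactly what you do. The additional remarks on finiteness and the variational interpretation are fine but not required.
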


\bigskip

\textsc{Department of Mathematics, National Defense Academy, Yokosuka,
2390--8686, Japan}

{\it Email address:} tsuchida@nda.ac.jp

\if0
%%%%%%%%%%%% Authors' addresses %%%%%%%%%%%%%
\address{ % First Author
Department of Mathematics \\
National Defense Academy \\
Yokosuka, Kanagawa 239-8686 \\
Japan
}
{tsuchida@nda.ac.jp}
%
%\address{% Second Author
%Mathematical Institute \\
%Tohoku University \\
%Sendai 980-8578 \\
%Japan
%}
%{author2@math.tohoku.ac.jp}

%start a new line
%\address{% Third Author
%Mathematical Institute \\
%Tohoku University \\
%Sendai 980-8578 \\
%Japan
%}
%{author3@math.tohoku.ac.jp}
%
%\address{% Fourth Author
%Mathematical Institute \\
%Tohoku University \\
%Sendai 980-8578 \\
%Japan
%}
%{author4@math.tohoku.ac.jp}
\fi
\end{document}